\documentclass[11pt]{article}

\usepackage{amsmath,amsfonts,amsthm,euscript,amssymb,psfrag,subfigure,capt-of}
\usepackage[dvips]{graphicx}
\usepackage{hyperref}
\usepackage{pdflscape}

\setlength{\textwidth}{6.5in}   
\setlength{\textheight}{9in}
\setlength{\evensidemargin}{0in}
\setlength{\oddsidemargin}{0in}
\setlength{\topmargin}{-.5in}
\setlength{\parskip}{0.125in}
\setlength{\parindent}{0in}

\newtheorem{theorem}{Theorem}

\newtheorem{lemma}[theorem]{Lemma}
\newtheorem{prop}[theorem]{Proposition}
\newtheorem{corollary}[theorem]{Corollary}

\newtheorem{conjecture}[theorem]{Conjecture}

\numberwithin{equation}{section}
\numberwithin{theorem}{section}

\newcommand{\A}{\mathbb{A}}

\newcommand{\E}{\EuScript{E}}
\newcommand{\F}{\mathbb{F}}

\newcommand{\Q}{\mathbb{Q}}

\newcommand{\Z}{\mathbb{Z}}

\newcommand{\FF}{\EuScript{F}}
\newcommand{\QQ}{\mathcal{Q}}

\renewcommand{\O}{\EuScript{O}}
\renewcommand{\P}{\mathbb{P}}

\newcommand{\ord}{\mathrm{ord}}

\newcommand{\Fbar}{\overline{\F}}

\newcommand{\loccit}{{\it loc.~cit.}}

\newcommand{\vareps}{\varepsilon}

\newcommand{\cpp}{{\small C++}}
\newcommand{\sage}{{\small Sage}}
\newcommand{\ellff}{{\small ELLFF}}
\newcommand{\bigellff}{{ ELLFF}}

\begin{document}

\title{Experimental Data for Goldfeld's Conjecture over Function Fields}
\author{Salman Baig
\!\thanks{Department of Mathematics, University of Washington,
\ {\tt salmanhb@math.washington.edu}}
\ and\ 
Chris Hall
\!\thanks{Department of Mathematics, University of Wyoming, Laramie,
\ {\tt chall14@uwyo.edu}}}
\maketitle

\begin{abstract}
\noindent This paper presents empirical evidence supporting Goldfeld's conjecture on the average analytic rank of a family of quadratic twists of a fixed elliptic curve in the function field setting. In particular, we consider representatives of the four classes of non-isogenous elliptic curves over $\F_q(t)$ with $(q,6)=1$ possessing two places of multiplicative reduction and one place of additive reduction. The case of $q=5$ provides the largest data set as well as the most convincing evidence that the average analytic rank converges to $1/2$, which we also show is a lower bound following an argument of Kowalski. The data was generated via explicit computation of the $L$-function of these elliptic curves, and we present the key results necessary to implement an algorithm to efficiently compute the $L$-function of non-isotrivial elliptic curves over $\F_q(t)$ by realizing such a curve as a quadratic twist of a pullback of a `versal' elliptic curve. We also provide a reference for our open-source library \ellff, which provides all the necessary functionality to compute such $L$-functions, and additional data on analytic rank distributions as they pertain to the density conjecture.
\end{abstract}

\section{Introduction}

Goldfeld's conjecture, in its original form, makes an assertion about a family of elliptic curves over a number field and some form of rank.  For example, if we fix an elliptic curve $E/\Q$ and consider the set of its quadratic twists ordered by (increasing) discriminant of the twisting fields, then the conjecture asserts that the average rank of the first $n$ curves tends to the limit $1/2$ as $n$ tends to infinity.  In this paper we will fix the rational function field $K=\F_q(t)$ as our base field and consider families of elliptic curves over $K$ for which we can calculate each family member's analytic rank.  Little theoretical progress has been made when we consider the average rank of an `increasing' sequence of curves, and the genesis of this paper lies in a computational project to generate a rich data set of $L$-functions for studying the (empirical) averages.

In the bulk of this paper we focus on some algorithms of increasing complexity for explicitly calculating the $L$-function $L(E/K,s)$ for an elliptic curve $E/K$; the increasing complexity allows for increased speed but at the cost of increasing the disk and memory space requirements.  In contrast to $L$-functions over number fields, these $L$-functions have the remarkable property that if we define a new variable $T$ by $T=q^{-s}$, then we can represent the $L$-function as a polynomial $L(E/K,T)\in 1+T\cdot\Z[T]$.  The analytic rank of $E/K$ is simply the order of vanishing at $s=1$ ($T=1/q$), hence the analytic Goldfeld's conjecture requires only a miniscule amount of the information present in each $L$-function.\footnote{The Birch--Swinnerton-Dyer conjecture asserts that the algebraic (Mordell-Weil) and analytic ranks are the same, so someone who likes looking for points may want to comb our database for curves of rank, preferably at least two, and try their hand at producing points.}  However, there are many other questions one can ask about how $L(E/K,T)$ varies with $E/K$, so we hope that our database and algorithms will prove useful for others.
 
For the calculation of a single $L$-function, one can use the theory outlined in the first two sections of section~\ref{sec::ec}.  We follow the usual approach for calculating the $L$-function by expressing it as an Euler product; the terms of the product are indexed by valuations in $K$, i.e.~by monic irreducibles in the polynomial ring $\F_q[t]$ and a point at infinity.  The Euler product is infinite, but there is a finite collection of Euler factors that completely determine $L(E/K,T)$.  To compute the $L$-function as efficiently as possible (in this approach) we want to minimize the size of this collection (e.g.~by using a functional equation) and the cost of computing a single Euler factor.

Those who have experience computing quadratic twists know that the cost of computing an Euler factor for a twist is much cheaper if one knows the Euler factor for the original curve, the work being reduced to computing a Legendre symbol.  A similar efficiency emerges when we consider pullbacks, i.e.~when we replace $K$ with a finite extension $L/K$: almost all of the Euler factors for $L(E/L,T)$ can be cheaply calculated from the Euler factors of $L(E/K,T)$.  In fact, for each $q$, there is an elliptic curve $E_0$ over $F=\F_q(j)$ such that every $E/K$ may be written as the combination of a pullback of $E_0$ to $K$ (via the embedding $F\to K$ induced by $j\mapsto j(E)$) and a twist (by quadratic $L/K$).  In particular, the Euler factors for $L(E/K,T)$ may be cheaply computed from the Euler factors of $L(E_0/F,T)$, and the second half of section~\ref{sec::ec} explains this in detail.

The upshot is that whenever we construct a new curve $E/K$ as a pullback or twist of another curve $E_0/F$, then the additional cost of computing $L(E/K,T)$ using a precomputed table of sufficiently many Euler factors for $L(E_0/F,T)$ is much cheaper than if we computed $L(E/K,T)$ from scratch.  We have written a library for calculating $L$-functions whose core routines use the methods we outline in section~\ref{sec::ec} and recently provided the wrapper routines for it to be used within \sage~ \cite{Sage}.  The library is called \ellff~(for elliptic $L$-functions over function fields), and we describe its basic structure in section~\ref{sec::ellff}.  We used our code to gather data to empirically study Goldfeld's conjecture in the function field setting, and we report a summary of this data and observations in section~\ref{sec::compute}.

\section{Theoretical Framework}\label{sec::ec}

\subsection{Elliptic Curves}

Fix a prime power $q$ relatively prime to 6.  Let $K=\F_q(t)$ be the function field of the curve $\P^1/\F_q$, $\O_K=\F_q[t]$ be the affine coordinate ring of $\P^1-\{\infty\}$, $\O_\infty=\F_q[u]$ be the affine coordinate ring of $\P^1-\{0\}$, and $u=1/t\in K$.  We identify the set of closed points $|\P^1|=\{\pi\}$ with the set formed by the closed point $\pi=\infty$ together with the monic irreducibles $\pi\in\F_q[t]$ of positive degree, and we write $\F_\pi$ for the residue field.  For $\pi=\infty$ we identify $\F_\pi$ with the quotient field $\F_q[u]/u$, and otherwise we identify $\F_\pi$ with the quotient field $\F_q[t]/\pi$.

Let $E/K$ be an elliptic curve.  Up to a change of coordinates, we may represent our elliptic curve as the projective plane curve given by the affine curve $y^2=x^3+ax+b$, where $a,b\in K$, together with the point at infinity.  The discriminant $\Delta=4a^3+27b^2$ and $j$-invariant $j=6912a^3/\Delta$ are rational functions in $t$, i.e.~elements of $K$, and $\Delta\neq 0$.

Up to a change of coordinates $(x,y)\mapsto(x/g^2,y/g^3)$ with $g\in K^\times$, we may assume $a,b\in\O_K$ and $\deg_t(\Delta)$ is minimal because $\O_K$ is a principal ideal domain, and we write $\Delta_\pi=\Delta$ for $\pi\neq\infty$.  There is a unique integer $e$ such that the substitution $(t,x,y)\mapsto (1/u,x/u^{2e},y/u^{3e})$ yields a model of $E$ over $\F_q[u]$ satisfying similar conditions and we write $\Delta_\infty$ for the discriminant of this model.  We glue the two models together over the annulus $\P^1-\{0,\infty\}$ to form the so-called minimal Weierstrass model $\E\to\P^1$; it is the identity component of the so-called N\'eron model of $E$.

For each $\pi$, we write $\E/\F_\pi$ for the fiber of $\E\to\P^1$ over $\pi$.  If $\pi\neq\infty$, then it is the projective plane curve given by `reducing modulo $\pi$' the model for $E$ over $\O_K$, while for $\pi=\infty$ we use the model for $E$ over $\F_q[u]$.  $\E/\F_\pi$ is a smooth curve if and only if the image of $\Delta_\pi$ in $\F_\pi$ is non-zero, otherwise it has a unique singular point.  We write $M$ for the finite subset of $\pi$ such that $\E/\F_\pi$ is singular with a node, $A$ for the finite subset such that $\E/\F_\pi$ is singular with a cusp, and $U$ for the open complement $\P^1-M-A$; they are the loci of multiplicative, additive, and good reduction respectively of $\E\to\P^1$.  We recall that if $\pi\neq\infty\in M\cup A$, then $\pi\in M$ if and only if the image of $b$ in $\F_\pi$ is non-zero, and a similar criterion holds if $\pi=\infty\in M\cup A$.

We decompose $M$ into the subset $M^+$ of $\pi$ for which the slopes of the two branches through the node of $\E/\F_\pi$ are rational over $\F_\pi$ and $M^-$ for the complement $M-M^+$; they are the loci of split and non-split reduction respectively.  The following lemma gives a criterion for deciding whether any $\pi\neq\infty\in M$ lies in $M^+$ or $M^-$, while for $\pi=\infty$ one can use the model for $E$ over $\F_q[u]$ to deduce a similar criterion.

\begin{lemma}
    If $\pi\neq\infty\in M$, then $\pi\in M^+$ if and only if the image of $6b$ in $\F_\pi$ is a square.
\end{lemma}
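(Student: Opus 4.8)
The plan is to make the reduction of the given Weierstrass model modulo $\pi$ explicit and read off the node directly. Since $\pi\in M$ lies in the bad-reduction locus, $\Delta_\pi\equiv 0\pmod\pi$, so the cubic $f(x)=x^3+ax+b\in\F_\pi[x]$ has a repeated root. It cannot have a triple root: a triple root $x_0$ of a depressed cubic forces $x_0=0$ (comparing the $x^2$-coefficient and using $\mathrm{char}\,\F_\pi\neq 3$), hence $a\equiv b\equiv 0\pmod\pi$, contradicting $b\not\equiv 0\pmod\pi$, which holds because $\pi\in M$. Therefore over $\F_\pi$ we may write $f(x)=(x-x_0)^2(x-x_1)$ with $x_0\neq x_1$; since $\gcd(f,f')=x-x_0$ is a polynomial over $\F_\pi$, the double root $x_0$ is $\F_\pi$-rational. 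Comparing coefficients — the $x^2$-term of $f$ vanishes — gives $x_1=-2x_0$, whence $a\equiv -3x_0^2$ and $b\equiv 2x_0^3$ in $\F_\pi$. In particular $x_0\neq 0$, and both $2x_0$ and $3x_0$ are units in $\F_\pi$ because $(q,6)=1$.

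Next I would locate the node and compute its tangent cone. The unique singular point of $\E/\F_\pi$ is $(x_0,0)$, and the substitution $x=x_0+w$ turns $y^2=(x-x_0)^2(x-x_1)$ into $y^2=w^2(w+x_0-x_1)=w^2(w+3x_0)$. So in coordinates $(w,y)$ centred at the node the equation reads $y^2-3x_0w^2=w^3$, whose tangent cone factors as $y^2-3x_0w^2=(y-\alpha w)(y+\alpha w)$ with $\alpha^2=3x_0$; the two branches through the node are the formal arcs $y=\pm\alpha w+O(w^2)$, so their slopes are $\pm\alpha$. By definition $\pi\in M^+$ precisely when these slopes lie in $\F_\pi$, i.e.\ precisely when $3x_0$ is a square in $\F_\pi^\times$. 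To finish, $b\equiv 2x_0^3$ gives $6b\equiv 12x_0^3=(2x_0)^2\cdot 3x_0$ in $\F_\pi$; since $2x_0\in\F_\pi^\times$, the element $6b$ is a square in $\F_\pi$ if and only if $3x_0$ is, and hence if and only if $\pi\in M^+$.

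There is no real obstacle: the argument is an elementary local computation once the curve is in short Weierstrass form, and the hypothesis $(q,6)=1$ is exactly what makes $2x_0$ and $3x_0$ units and lets us pass between ``$3x_0$ a square'' and ``$6b$ a square''. The only steps deserving care are that the double root $x_0$ lies in $\F_\pi$ (which follows from $b\not\equiv 0\pmod\pi$ together with the fact that $\gcd(f,f')=x-x_0$ is defined over $\F_\pi$) and that the paper's notion of the slopes of the branches agrees with the factorisation of the tangent cone — which it does, since in the coordinates $(w,y)$ the branches are $y=\pm\alpha w+O(w^2)$. For $\pi=\infty$ one runs the identical computation after replacing $(a,b)$ by the coefficients of the chosen model of $E$ over $\F_q[u]$.
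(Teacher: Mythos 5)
Your proof is correct and follows essentially the same route as the paper: factor the reduced cubic as $(x-c)^2(x+2c)$, identify the node, compute that the branch slopes are $\pm\sqrt{3c}$, and convert via $b\equiv 2c^3$ to the condition that $6b=3c\cdot(2c)^2$ is a square. Your additional checks (ruling out a triple root using $b\not\equiv 0$ and verifying the double root is $\F_\pi$-rational) are fine elaborations of details the paper leaves implicit.
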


\begin{proof}
Over $\F_\pi$ our affine model specializes to $y^2=(x-c)^2(x+2c)$ for some $c\neq 0\in\Fbar_\pi$ and the node lies at $(x,y)=(c,0)$.  The substitution $y=s\cdot(x-c)$ and cancellation of $(x-c)^2$ leads to the curve $s^2=x+2c$, the blow up of our original curve at the node.  The slopes of the two branches are the $s$-coordinates of the points $(x,s)=(c,s)$ which lie on this curve, hence $s=\pm\sqrt{3c}$.  In particular, the slopes are rational over $\F_\pi$ if and only if $3c$ is a square in $\F_\pi$.  In terms of the singular model, we see that the image of $b$ in $\F_\pi$ is $2c^3$, hence $3c$ is a square in $\F_\pi$ if and only if $6b=3c\cdot(2c)^2$ is.
\end{proof}

\newcommand{\Leg}[2]{\left(\frac{#1}{#2}\right)}

In the first three lines of the following table, produced using table 15.1 of \cite{S}, we give criteria for determining the type of additive reduction $E$ has over $\pi\in A$.  In the last line of the table we define a constant $\epsilon_\pi$ which will be used in the next section.

\newcommand{\I}{\mathrm{I}}
\newcommand{\II}{\mathrm{II}}
\newcommand{\III}{\mathrm{III}}
\newcommand{\IV}{\mathrm{IV}}

\vspace{.25cm} 
\renewcommand{\arraystretch}{1.25}
\begin{equation*}
    \begin{array}{c|cc|cccc|cc} 
    \hline
    \mbox{Kodaira symbol}
    		& \I_n^* & \I_0^* & \II & \IV & \IV^* & \II^* & \III & \III^* \\
	\noalign{\hrule height 2pt}
	\ord_\pi(\Delta_\pi)
		& 6+n & 6 & 2 & 4 & 8 & 10 & 3 & 9 \\
	j\!\!\!\!\pmod{\pi} & \infty & \not\equiv\infty
		& 0 & 0 & 0 & 0 & 1728 & 1728 \\
	\epsilon_\pi & -1 & -1 & -1 & -3 & -3 & -1 & -2 & -2  \\ \hline
    \end{array}
\end{equation*}
\renewcommand{\arraystretch}{1}
\captionof{table}{Additive Reduction Information for $E/K$}
\label{table::kodaira}


\subsection{$L$-functions}\label{sec::l_fcn}

We keep the notation of the previous section and add the assumption that $j$ is non-constant, i.e.~it lies in the complement $K-\F_q$, and we remark that most of what follows extends to the case where $\Delta$ (but not necessarily $j$) is non-constant.

For each $\pi\in|U|$ and $m\geq 1$, we write $\F_{\pi^m}$ for the unique extension of $\F_\pi$ of degree $m$, $\E(\F_{\pi^m})$ for the set of $\F_{\pi^m}$-rational points of $\E/\F_\pi$, and
    $$ a_{\pi^m} = q^{m\deg(\pi)} + 1 - \#\E(\F_{\pi^m}). $$

For each $\pi\in|U|$, the zeta function of $\E/\F_\pi$ is given by the exponential generating series
\begin{equation}\label{eqn1}
	 Z(T,\E/\F_\pi) =
	 	\exp\left(\sum_{m=1}^\infty\#\E(\F_{\pi^m})\frac{T^m}{m}\right).
\end{equation}
It is a rational function in $\Q(T)$ with denominator $(1-T)(1-q^{\deg(\pi)}T)$ and numerator
    $$ L(T,\E/\F_\pi) = 1 - a_{\pi^1} T + q^{\deg(\pi)}T^2. $$

Because we assumed $j$ is non-constant, the $L$-function $L(T,E/K)$ is a polynomial in $\Z[T]$ with constant coefficient 1 (cf.~bottom of page 11 of \cite{K}) and satisfies
    $$ \deg(L(T,E/K)) = \deg(M) + 2\deg(A) - 4. $$
It has an Euler product expansion
\begin{equation}\label{eqn2}
    L(T,E/K) =
    \prod_{\pi\in|U|}L\left(T^{\deg(\pi)},\E/\F_\pi\right)^{-1}
    \cdot \!\!\prod_{\pi\in M^+}\left(1-T^{\deg(\pi)}\right)^{-1}
    \cdot \!\!\prod_{\pi\in M^-}\left(1+T^{\deg(\pi)}\right)^{-1}.
\end{equation}

Using (\ref{eqn1}) and the formal identity $1/(1-\alpha T)=\exp(\sum_{n=1}^\infty (\alpha T)^n/n)$
it is easy to show that
\begin{equation*}
	L(T^{\deg(\pi)},\E/\F_\pi) =
    		\exp\left( \sum_{n\geq 1,\deg(\pi)|n}
			\deg(\pi)\cdot a_{\pi^{n/\deg(\pi)}}\frac{T^n}{n}
		\right).
\end{equation*}
Therefore, if we define
\begin{equation*}
    b_n = \sum_{\pi\in|U|,\deg(\pi)|n} \deg(\pi)\cdot a_{\pi^{n/\deg(\pi)}}
+ \sum_{\pi\in M^+, \deg(\pi)\mid n} \deg(\pi) + 
		\sum_{\pi\in M^-,\,\deg(\pi)|n} \deg(\pi)(-1)^{n/\deg(\pi)},
\end{equation*}
then we can rewrite (\ref{eqn2}) as
\begin{equation}\label{eqn3}
    L(T,E/K) = \exp\left(
    		\sum_{n=1}^\infty b_n \frac{T^n}{n}							 
    	\right).
\end{equation}

If we truncate the formal series expansion of the right side of (\ref{eqn3}) by reducing modulo $T^{N+1}$ for $N\geq 0$, then we obtain the congruence
\begin{equation}\label{eqn4}
    L(T,E/K)\equiv  \exp\left(
    		\sum_{n=1}^N b_n \frac{T^n}{n}\right)
		\pmod{T^{N+1}}.
\end{equation}
Thus $L(T,E/K)\pmod{T^{N+1}}$ is completely determined by $\{b_n:1\leq n\leq N\}$ for $N=\deg(L(T,E/K))$, and by definition this set is determined by the Euler factors over $\pi\in|\P^1|$ satisfying $\deg(\pi)\leq N$.  In fact, by taking the functional equation into consideration, as described below, it suffices to take $N=\lfloor\deg(L(T,E/K))/2\rfloor$.

If we write $L(T,E/K)=\sum_{n=0}^N c_nT^n$ for $N=\deg(L(T,E/K))$, then to recover $c_0,\ldots,c_N$ from (\ref{eqn4}) it suffices to apply the following lemma.

\begin{lemma}\label{lemma2}
    If $\{c_0=1\}\cup\{b_n,c_n:1\leq n\leq N\}$ are numbers satisfying
    $$
    		\exp\left(\sum_{n=1}^N b_n\frac{T^n}{n}\right)
    		\equiv
    		\sum_{n=0}^N c_n T^n
		\pmod{T^{N+1}},
	$$
then they satisfy the recursive relation
    $$ c_n = \frac{1}{n}\sum_{m=1}^n b_m\cdot c_{n-m},\quad n\geq 1. $$
\end{lemma}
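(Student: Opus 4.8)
The plan is to obtain the recursion by logarithmic differentiation, comparing coefficients on both sides. Set $F(T)=\sum_{n=0}^N c_nT^n$ and let $E(T)=\exp\bigl(\sum_{n=1}^N b_nT^n/n\bigr)\in\Q[[T]]$, so that the hypothesis reads $E(T)\equiv F(T)\pmod{T^{N+1}}$ with $c_0=1$.

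First I would differentiate formally. In $\Q[[T]]$ one has $\tfrac{d}{dT}\exp(g(T))=g'(T)\exp(g(T))$, so with $g(T)=\sum_{n=1}^N b_nT^n/n$ we get $E'(T)=E(T)\cdot\sum_{m=1}^N b_mT^{m-1}$. Since $E\equiv F\pmod{T^{N+1}}$ implies $E'\equiv F'\pmod{T^N}$, this yields $F'(T)\equiv F(T)\cdot\bigl(\sum_{m=1}^N b_mT^{m-1}\bigr)\pmod{T^N}$.

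Next I would read off the coefficient of $T^{n-1}$ for each $n$ with $1\le n\le N$. On the left this coefficient is $nc_n$; on the right it is $\sum_{m+k=n,\,m\ge 1,\,k\ge 0}b_mc_k=\sum_{m=1}^n b_mc_{n-m}$ (terms with $m>n$ do not occur, and every $c_{n-m}$ appearing satisfies $0\le n-m\le N$, hence is among the given numbers). Equating the two gives $nc_n=\sum_{m=1}^n b_mc_{n-m}$, and dividing by $n$ produces the asserted relation.

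The argument is essentially bookkeeping; the only points requiring (minor) care are the validity of formal differentiation of $\exp$ in $\Q[[T]]$ and the fact that a congruence modulo $T^{N+1}$ differentiates to one modulo $T^N$, both standard. One could equally well dispense with power series and work directly in the truncated ring $\Q[T]/(T^{N+1})$ using the formal derivation $d/dT$, which descends to that quotient and obeys the Leibniz rule; the computation is word-for-word the same, and no genuine obstacle arises.
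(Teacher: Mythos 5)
Your proposal is correct and follows essentially the same route as the paper: take the (logarithmic) derivative of the assumed congruence, obtain $F'(T)\equiv F(T)\cdot\sum_{m}b_mT^{m-1}$ modulo an appropriate power of $T$, and compare coefficients of $T^{n-1}$ to get $nc_n=\sum_{m=1}^n b_mc_{n-m}$. Your explicit remark that the congruence modulo $T^{N+1}$ differentiates to one modulo $T^{N}$ is a small point of care the paper glosses over, but the argument is the same.
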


\begin{proof}
If we take the (formal) logarithmic derivative of both sides of the assumed relation between the $b_n$ and $c_n$ and clear denominators, then we obtain the relation
    $$ 
    	\left(\sum_{n=1}^N b_n T^n\right)\left(\sum_{n=0}^N c_n T^{n-1}\right)
    \equiv
    \sum_{n=1}^N nc_n T^{n-1}
		\pmod{T^{N+1}}. $$
The lemma follows by expanding the left side and comparing the coefficients on each side.
\end{proof}

As stated above, $L(T,E/K)$ satisfies a functional equation: there is $\varepsilon(E/K)\in\{\pm 1\}$ such that
\begin{equation}\label{eqn::fe}
	L(T,E/K)=\varepsilon(E/K)\cdot(qT)^N\cdot L(1/(q^2T),E/K),
\end{equation}
hence we have the relation
\begin{equation}\label{eqn5}
    c_n=\varepsilon(E/K)\cdot q^{2n-N}\cdot c_{N-n},\quad 0\leq n\leq N.
\end{equation}
In the following lemma we write $\Leg{\epsilon_\pi}{\pi}$ for the Legendre
symbol in $\F_\pi$ of $\epsilon_\pi$, defined in Table \ref{table::kodaira},
and give a formula for $\varepsilon(E/K)$ (cf.~corollary 5 of \cite{H}).

\begin{lemma}\label{lem::sign-fe}
	$$ \varepsilon(E/K) =
		(-1)^{\#M^+}\!\cdot\prod_{\pi\in A}\Leg{\epsilon_\pi}{\pi}.
	$$
\end{lemma}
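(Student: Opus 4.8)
The plan is to express $\varepsilon(E/K)$ as a product of local root numbers, one for each place of $K$, and to evaluate the individual factors. Fix a prime $\ell$ with $\ell\nmid q$ and let $V$ be the rank-two $\ell$-adic representation of $\Gal(\Kbar/K)$ attached to $E$, so that $L(T,E/K)=\det\bigl(1-T\,\Frob_q\mid H^1\bigr)$, where $H^1$ is the first cohomology of the corresponding middle-extension sheaf on $\P^1_{\Fqbar}$ (the hypothesis that $j$ is non-constant guarantees $H^0=H^2=0$, so this is indeed a polynomial). By the product formula for $\varepsilon$-constants — Deligne's formula, in the $\ell$-adic function-field form of Laumon — the constant appearing in the functional equation factors as $\varepsilon(E/K)=\prod_{\pi\in|\P^1|}\varepsilon_\pi$, where $\varepsilon_\pi=\varepsilon_\pi\bigl(V|_{K_\pi},\psi_\pi,dx_\pi\bigr)$ is a local constant; the dependence on the auxiliary global additive character and Haar measure cancels in the product thanks to the product formula on $K$. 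Because $V$ is self-dual up to a Tate twist (the Weil pairing on the Tate module), this reproduces the functional equation $(\ref{eqn::fe})$ with $\varepsilon(E/K)\in\{\pm1\}$, and, normalized as a root number, each $\varepsilon_\pi$ is itself a sign. At every $\pi\in|U|$ the local representation $V|_{K_\pi}$ is unramified, so $\varepsilon_\pi=1$; hence only $\pi\in M\cup A$ contribute, and it remains to compute their local root numbers.

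For $\pi\in M$ the curve $E/K_\pi$ has multiplicative reduction, so $V|_{K_\pi}$ is the special representation $\Sp(2)$ twisted by the unramified quadratic character $\eta_\pi$ of $K_\pi$, which is trivial exactly when the reduction is split. The root number of $\Sp(2)$ is $-1$, and twisting by an unramified character multiplies the root number by the value of that character on a uniformizer; therefore $\varepsilon_\pi=-1$ for $\pi\in M^+$ and $\varepsilon_\pi=+1$ for $\pi\in M^-$, and so $\prod_{\pi\in M}\varepsilon_\pi=(-1)^{\#M^+}$.

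For $\pi\in A$ we use the hypothesis $(q,6)=1$: it forces the residue characteristic at $\pi$ to be at least $5$, and then $E$ acquires good or multiplicative reduction over a tamely and totally ramified extension $K'_\pi/K_\pi$ whose degree $e\in\{2,3,4,6\}$ is determined by the Kodaira symbol at $\pi$, i.e.~by $\ord_\pi(\Delta_\pi)$ and the value of $j\bmod\pi$ as recorded in the first two rows of Table~\ref{table::kodaira}. Consequently $V|_{K_\pi}$ is, up to a twist, induced from a tamely ramified character of $K'_\pi$, and its local root number is evaluated by the classical recipe for tame $\varepsilon$-factors. Going through the list of types — this is precisely the content of corollary 5 of \cite{H}, which in turn rests on Rohrlich's determination of local root numbers in residue characteristic at least $5$ — one finds $\varepsilon_\pi=\Leg{-1}{\F_\pi}$ for the types $\I_n^*$, $\I_0^*$, $\II$, $\II^*$ (the potentially multiplicative case, together with $e=2$ and $e=6$), $\varepsilon_\pi=\Leg{-3}{\F_\pi}$ for $\IV$, $\IV^*$ (the case $e=3$), and $\varepsilon_\pi=\Leg{-2}{\F_\pi}$ for $\III$, $\III^*$ (the case $e=4$). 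Comparing with the last row of Table~\ref{table::kodaira}, this says exactly that $\varepsilon_\pi=\Leg{\epsilon_\pi}{\pi}$ for every $\pi\in A$. Multiplying together the contributions of $U$, $M$, and $A$ yields the asserted formula; the place $\pi=\infty$ is handled identically, reading its reduction type off the $\F_q[u]$-model.

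The main obstacle is the type-by-type evaluation of the tame local root numbers at the additive places: that is where the genuine arithmetic lies, and it is exactly what \cite{H} (building on Rohrlich) supplies, so that in practice the remaining work is bookkeeping — matching Rohrlich's parametrization (by the order $e$ of potential good reduction, together with the potentially multiplicative case) against the Kodaira symbols of Table~\ref{table::kodaira} and against the constants $\epsilon_\pi$. A secondary point that deserves care is the cleanliness of the local–global factorization over $\F_q(t)$: one must verify that no spurious global factor intervenes — the function-field analogue of the archimedean contribution in Rohrlich's root-number formula over number fields is simply absent — and that the place at infinity enters the product on exactly the same footing as the finite places.
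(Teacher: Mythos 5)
Your proposal is correct and follows essentially the same route as the paper: both identify $\varepsilon(E/K)$ with the global root number, factor it into local root numbers (trivial at places of good reduction, $-1$ at split and $+1$ at non-split multiplicative places, giving $(-1)^{\#M^+}$), and evaluate the additive places through the tame local root-number computations of Rohrlich (valid because $(q,6)=1$ forces residue characteristic at least $5$), which match the constants $\epsilon_\pi$ of Table~\ref{table::kodaira}. The paper's proof is just a terser citation of \cite{R} (and corollary 5 of \cite{H}) for exactly these local calculations, so your added detail on the product formula and the case-by-case matching is a fleshed-out version of the same argument.
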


\begin{proof}
This follows from calculations in \cite{R} where the sign is the global root number of $E/K$ and is given by a product of local root numbers.  If $\pi\in|U|$, then the local root number is trivial by proposition 8 of \loccit~with $\tau=1$.  If $\pi\in M\cup A$, then we apply parts (ii) and (iii) theorem 2 of \loccit~with $\tau=1$ for the remaining cases.  Note, if $\pi\in A$ and does not have Kodaira symbol $I_n^*$ with $n>0$, then we need the assumption that $q$ is not divisible by 2 or 3.
\end{proof}

We observe that the recursive relation given by lemma~\ref{lemma2} enables us to perform a consistency check when trying to compute $L(T,E/K)$: the $b_m$ and $c_m$ are integers, so for each $n\geq 1$, the integer $\sum_{m=1}^n b_m\cdot c_{n-m}$ must be divisible by $n$.  A second consistency check is to compute $c_n$ for one or more $n>\lfloor N/2\rfloor$ using the same method as for smaller $n$ and then to verify that (\ref{eqn5}) holds.  While one would not want to use the latter check when computing large data sets, it is very useful for making sure that the calculations are correct because one can use it to test a small subset of data.

\subsection{Quadratic Twists}\label{sec::twist}

We continue the notation of the previous sections.  Thus we fix an elliptic curve $E/K$ and a minimal Weierstrass model $y^2=x^3+ax+b$ of $E$ over $\O_K$.  For each $f\in K^\times$, we define $E_f/K$ to be the elliptic curve with affine model $y^2=x^3+f^2ax+f^3b$.


\begin{lemma}
Suppose $L/K$ is an extension.
If an elliptic curve over $K$ is $L$-isomorphic to $E$, then it is $K$-isomorphic to some $E_f$ and $\sqrt{f}\in L$.  Conversely, if $\sqrt{f}\in L$, then $E$ and $E_f$ are $L$-isomorphic.
\end{lemma}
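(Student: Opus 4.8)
\emph{Proof proposal.} The plan is to prove the two directions separately. The converse is a single explicit coordinate change, while the forward direction amounts to writing an arbitrary $L$-isomorphism to $E$ in normal form and then checking that the twisting parameter it produces already lies in $K^\times$. First I would set up notation: fix the Weierstrass model $y^2=x^3+ax+b$ of $E$ over $\O_K$, and recall that by the standing assumption of this section $j$ is non-constant, which over $K=\F_q(t)$ forces both $a\neq 0$ and $b\neq 0$ (otherwise $j\equiv 0$ or $j\equiv 1728$).

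For the converse, suppose $\sqrt f\in L$ and put $u=\sqrt f$. I would verify directly that the substitution $(x,y)\mapsto(u^2x,u^3y)=(fx,f\sqrt f\,y)$ carries the model $y^2=x^3+ax+b$ of $E$ to the model $y^2=x^3+f^2ax+f^3b$ of $E_f$, since $(f\sqrt f\,y)^2=f^3(x^3+ax+b)=(fx)^3+f^2a\,(fx)+f^3b$. Because $u=\sqrt f\in L$, this map is an isomorphism defined over $L$, so $E$ and $E_f$ are $L$-isomorphic.

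For the forward direction, let $E'/K$ be $L$-isomorphic to $E$ and fix a short Weierstrass model $y^2=x^3+a'x+b'$ of $E'$ over $K$. Here I would invoke the classification of isomorphisms between short Weierstrass models: since $q$ is prime to $6$, any such isomorphism has the shape $(x,y)\mapsto(v^2x,v^3y)$ with $v$ a unit of the field over which it is defined, because the vanishing of the $xy$-, $y$- and $x^2$-coefficients in both equations forces the translation and shear parameters of a general Weierstrass change of variables to be zero. Applying this to an $L$-isomorphism $E\to E'$ produces $v\in L^\times$ with $a'=v^4a$ and $b'=v^6b$; setting $f=v^2\in L^\times$ gives $a'=f^2a$ and $b'=f^3b$. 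The key step is then the descent of $f$ to $K$: since $a,b\in K^\times$ (this is where non-constancy of $j$ is used), we get $f^2=a'/a\in K^\times$ and $f^3=b'/b\in K^\times$, whence $f=f^3/f^2\in K^\times$. Thus the chosen model of $E'$ is literally the model $y^2=x^3+f^2ax+f^3b$ defining $E_f$, so $E'$ is $K$-isomorphic to $E_f$, while $\sqrt f=\pm v\in L$.

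The routine ingredients — the coordinate computation in the converse and the normal form of a Weierstrass isomorphism in residue characteristic prime to $6$ — are standard, so I expect the only point truly needing care to be the descent $f\in K^\times$, i.e.\ ruling out $a=0$ and $b=0$; this is exactly what the non-constancy of $j$ supplies. (If one preferred to avoid that hypothesis, the degenerate cases could be handled by a short separate argument, since $a=0$ and $b=0$ are intrinsic to the $K$-isomorphism class of $E$.)
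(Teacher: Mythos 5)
Your argument is correct and follows essentially the same route as the paper: both put an $L$-isomorphism between short Weierstrass models into the normal form $(x,y)\mapsto(c^2x,c^3y)$ (valid since $q$ is prime to $6$), deduce $a'=c^4a$, $b'=c^6b$, and use the non-constancy of $j$ (so $a,b\neq 0$) to descend $f=c^2$ to $K^\times$, with the converse given by the same explicit change of variables with $c=\sqrt{f}$. Your descent via $f=f^3/f^2=(b'/b)\,/\,(a'/a)$ is in fact stated a bit more carefully than the paper's shorthand.
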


\begin{proof}
If $y^2=x^3+a'x+b'$ is an affine model for an elliptic curve $E'/K$, then an $L$-isomorphism $E\to E'$ must take the form $(x,y)\mapsto (x/c^2,y/c^3)$ for some $c\in L^\times$ (see page 50 of \cite{S}); recall $j$ is non-constant, hence neither 0 nor 1728.  In particular, $a'=c^4a$ and $b'=c^6b$, so $f=c^2=b'/a'$ lies in $K^\times$ and $E'=E_f$.  Conversely, if $c=\pm\sqrt{f}\in L$, then $(x,y)\mapsto(x/c^2,y/c^3)$ is an $L$-isomorphism $E\to E_f$.
\end{proof}

The lemma implies $L=K(\sqrt{f})$ is the smallest extension over which $E,E_f$ are $L$-isomorphic.  Hence if $f$ lies in the complement $K^\times-(K^{\times})^2$, then $E_f/K$ is a so-called quadratic twist of $E/K$.

\begin{lemma}\label{lemma::twist}
$E_f,E_g$ are $K$-isomorphic if and only if $f=gc^2$ for some $c\in K^\times$.
\end{lemma}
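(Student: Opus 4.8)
The plan is to reuse the normal form for isomorphisms of short Weierstrass curves that already drove the proofs of the two preceding lemmas, now taking $E_g$ (rather than $E$) as the base curve. For the ``only if'' direction, suppose $E_f$ and $E_g$ are $K$-isomorphic; I would fix a $K$-isomorphism $\phi\colon E_g\to E_f$ and recall (page 50 of \cite{S}, as invoked above) that, on the given models $y^2=x^3+g^2ax+g^3b$ and $y^2=x^3+f^2ax+f^3b$, such a $\phi$ must have the form $(x,y)\mapsto(x/c^2,y/c^3)$ for some $c\in K^\times$. Comparing the coefficient of $x$ and the constant term then yields
$$ f^2a=c^4g^2a,\qquad f^3b=c^6g^3b. $$
Since $j$ is non-constant it is neither $0$ nor $1728$, so $a\neq 0$ and $b\neq 0$ in $K$; cancelling them gives $f^2=(c^2g)^2$ and $f^3=(c^2g)^3$, and dividing (legitimate as $f\in K^\times$) gives $f=c^2g$, i.e.\ $f=gc^2$ with $c\in K^\times$.

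For the ``if'' direction, if $f=gc^2$ with $c\in K^\times$ I would simply observe that $(x,y)\mapsto(x/c^2,y/c^3)$ carries the model $y^2=x^3+g^2ax+g^3b$ of $E_g$ to $y^2=x^3+(c^2g)^2a\,x+(c^2g)^3b$, which is exactly the chosen model of $E_{gc^2}=E_f$; being defined over $K$, this is the desired $K$-isomorphism. Equivalently, one notes straight from the definition that $(E_g)_h=E_{gh}$ for every $h\in K^\times$ and applies the previous lemma to $E_g$ in place of $E$, with $L=K$ and $h=c^2$, since $\sqrt{c^2}=c\in K$.

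I do not expect a genuine obstacle here; the one point that repays a little care is the removal of the root-of-unity ambiguity. The coefficient of $x$ alone pins down $f$ only up to sign, and the constant term alone only up to a cube root of unity lying in $K$; it is the conjunction of the two relations — which is precisely where $a\neq 0$ and $b\neq 0$, i.e.\ the non-constancy of $j$, is used — that forces $f=c^2g$ on the nose.
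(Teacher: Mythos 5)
Your proof is correct and is essentially the paper's argument: the paper simply cites the preceding lemma (applied with $E_g$ in place of $E$ and $L=K$), whose proof is exactly the coefficient comparison $f^2a=c^4g^2a$, $f^3b=c^6g^3b$ with $a,b\neq 0$ that you spell out, and you even note this reduction yourself. Your remark on resolving the sign/cube-root ambiguity by combining the two relations is the same point implicit in the cited lemma's identification $f=c^2=b'/a'$, so there is nothing genuinely different here.
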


\begin{proof}
Replace $E/K$ by $E_g/K$ and apply the previous lemma with $L=K$.
\end{proof}

We define the family of polynomials
$$
    \FF = \{ f\in\O_K : f\mbox{ is monic, square-free, prime to $\Delta$} \}
$$
and write $\FF_d\subset\FF$ for the subset of $f$ satisfying $\deg(f)=d$.  The previous lemma implies the $E_f$ are pairwise non-$K$-isomorphic for $f\in\FF$, while for a fixed $d$ we will see that $\deg(L(T,E_f/K))$ is independent of $f\in\FF_d$.  The latter fact would not be true if we dropped the condition that $f$ be relatively prime to $\Delta$.  As we will see, if $\alpha\in\F_q^\times$ is a non-square, then $L(T,E_{\alpha f}/K)=L(-T,E_f/K)$, hence the reason we restrict to monic $f$.

If $f\in\FF$, then one can easily verify that $y^2=x^3+f^2ax+f^3b$ is a minimal Weierstrass model for $E_f$ over $\O_K$ with discriminant $f^6\cdot\Delta$.  There is a unique integer $e$ such that the substitution $(t,x,y)\mapsto (1/u,x/u^e,y/u^e)$ yields a minimal Weierstrass model for $E_f$ over $\F_q[u]$, and we glue the models together over $\P^1-\{0,\infty\}$ to construct the minimal Weierstrass model $\E_f\to\P^1$.  We write $M_f$ and $A_f$ respectively for the divisors of multiplicative and additive reduction respectively of $\E_f\to\P^1$.

We write $\A^1$ for the complement $\P^1-\{\infty\}$.
If $\pi\in|\A^1|$, then one can easily verify that
    $$ M_f\cap\A^1=M\cap\A^1,\qquad
       A_f\cap\A^1=(A\cap\A^1)\cup\{\pi\in|\A^1|:\pi|f\}. $$
If $\pi\in M_f\cap\A^1$, then one can also easily verify that $\E_f/\F_\pi$ has the same splitting behavior as $\E/\F_\pi$ if and only if the image of $f$ is a square in $\F_\pi$, and otherwise it has the opposite splitting behavior; that is,
    $$ M_f^{\pm}\cap\A^1 =
    		\left\{\pi\in M^{\pm}\cap\A^1 : \Leg{f}{\pi}=\pm 1 \right\}
		\cup
    		\left\{\pi\in M^{\mp}\cap\A^1 : \Leg{f}{\pi}=\mp 1 \right\}.
	$$
If $f\in\FF_d$ and $d$ is even, then $\E$ and $\E_f$ are isomorphic over $\F_\infty$.  On the other hand, if $d$ is odd, then the Kodaira symbols of $\E/\F_\infty$ and $\E_f/\F_\infty$ form an unordered pair $\{S,S^*\}$, where $S\in\{\I_n,\II,\III,\IV\}$.

If we fix a non-square $\alpha\in\F_q^\times$ and $f\in\FF$, then a similar calculation for $E_{\alpha f}$ shows that the Kodaira symbols for $\E_f$ and $\E_{\alpha f}$ are the same for all $\pi\in|\P^1|$, so $M_{\alpha f}=M_f$ and $A_{\alpha f}=A_f$.  If $\pi\in A_f$, then $\E_f$ and $\E_{\alpha f}$ are isomorphic over $\F_\pi$.  On the other hand, for every $\pi\in|\P^1-A_f|$, there is a unique quadratic twist of $\E_f/\F_\pi$, which we call the scalar twist, and it is easy to show that $\E_{\alpha f}/\F_\pi$ is isomorphic to $\E_f/\F_\pi$ if $\deg(\pi)=[\F_\pi:\F_q]$ is even, otherwise it is the scalar twist.  We call $E_{\alpha f}$ the scalar twist of $E_f$.


\subsection{$L$-functions of Quadratic Twists}\label{sec::l-twist}

We continue the notation of the previous section and fix an elliptic curve
$E/K$ and a quadratic twist $E_f/K$.  If $U_f$, $M_f$, and $A_f$ are the
primes over which $E_f$ has good, multiplicative, and additive reduction
respectively, then we can use the results of section~\ref{sec::l_fcn} to
infer that the $L$-function $L(T,E_f/K)$ has Euler product expansion
\begin{equation*}
    L(T,E_f/K) =
    \prod_{\pi\in|U_f|}L\left(T^{\deg(\pi)},\E_f/\F_\pi\right)^{-1}
    \cdot \!\!\prod_{\pi\in M_f^+}\left(1-T^{\deg(\pi)}\right)^{-1}
    \cdot \!\!\prod_{\pi\in M_f^-}\left(1+T^{\deg(\pi)}\right)^{-1}.
\end{equation*}
There is an important observation which relates this to the Euler product
expansion in (\ref{eqn2}) of $L(T,E/K)$: if $\pi$ lies in $U\cap U_f$ and
if $\chi_\pi(f)\in\{\pm 1\}$ denotes the Legendre symbol of $f\pmod{\pi}$, then
\[
    L(T,\E_f/\F_\pi) = L(\chi_\pi(f)T,\E/\F_\pi).
\]
In particular, if one has precomputed sufficiently many Euler factors for
$L(T,E/K)$, then for most of the Euler factors of $L(T,E_f/K)$, the cost of
computing the factor is essentially the cost of computing $\chi_\pi(f)$.

\subsection{Pullbacks}\label{sec::pull}

We continue the notation of previous sections and fix an elliptic curve
$E/K$ and a minimal Weierstrass model $y^2=x^3+ax+b$ of $E$ over $\O_K$.
We write $M$, $A$ respectively for the subsets of primes in $K$ over which
$E$ has multiplicative and additive reduction respectively.

\newcommand{\TT}{\theta}

We fix a rational function field $L=\F_q(w)$ and a non-constant element
$\TT\in L$, and we write $\TT^*:K\to L$ for the embedding induced by sending
$t$ to $\TT$.  Let $\O_L\subset L$ be the integral closure of $\O_K$, and let
$\O_{\bar\infty}\subset L$ be the integral closure of $\O_\infty\subset K$.
We call the primes of $\O_L$ the finite primes of $L$ and the primes of
$\O_{\bar\infty}$ the infinite primes of $L$.  In general, $\O_L$ is not
$\F_q[w]$ and $\O_{\bar\infty}$ is not the local ring with uniformizer
$1/w$, but rather the infinite primes are the poles of $\TT$.  If $\pi$ is a
prime (finite or infinite) in $L$, we write $\TT(\pi)$ for the corresponding
prime in $K$, $f_\pi$ for the inertial degree of $\pi$ over $\TT(\pi)$, and
$e_\pi$ for the ramification degree.

We write $E_L$ for the elliptic curve over $L$ (eliding the dependence on the choice of $\TT$) with affine model
$y^2=x^3+\TT^*(a)x+\TT^*(b)$, and thus the coefficients of the model are
rational functions in $w$ when $a,b$ are viewed as elements of $L$ via
$\TT^*$.  A priori, this model is not a minimal Weierstrass model of $E_L$
over $\O_L$, but if $\pi$ is a finite prime that is unramified in $L$ or
if $\TT(\pi)$ does not lie in $A$, then the model is a minimal Weierstrass
model over the local ring $\O_\pi$.  Similarly, if $\pi\in\O_{\bar\infty}$
is an infinite prime, then a minimal Weierstrass model for $E$ over
$\O_\infty$ is guaranteed to be a minimal Weierstrass model over $\O_\pi$
only if $\pi$ is unramified over $\TT(\pi)$ or if $\infty$ does not lie in
$A$.

Now suppose that $\pi$ is a prime of $L$ such that $\pi$ is ramified over
$\TT(\pi)$ and $E$ has bad reduction over $\TT(\pi)$, and let $y^2=x^3+a_\pi
x+b_\pi$ be a minimal Weierstrass model of $E$ over $\O_{\TT(\pi)}$ and let
$\Delta_{\TT(\pi)}$ be the discriminant of this model.  If $E$ has Kodaira
type $\mathrm{I}_n$ over $\TT(\pi)$ and if $e=e_\pi$ ($e$ being the unique integer used to obtain a model for $E$ over $\F_q[u]$), then
$y^2=x^3+\TT^*(a_\pi)x+\TT^*(b_\pi)$ is a minimal Weierstrass model of $E_L$
over $\O_\pi$ with discriminant $\Delta_\pi=\TT^*(\Delta_{\TT(\pi)})$ and
$E_L$ has Kodaira type $\mathrm{I}_{en}$ over $\pi$.  On the other hand, if
$\TT(\pi)$ lies in $A$, then the Kodaira type of $E_L$ over $\pi$ may differ
from the Kodaira type of $E$ over $\TT(\pi)$ depending on $e=e_\pi$.  More
precisely, if $E$ has Kodaira type $\I_n^*$ over $\TT(\pi)$, then then $E_L$
has Kodaira type $\I_{en}$ or $\I_{en}^*$ over $\pi$ if $e$ is even or odd
respectively.  Otherwise, the discriminant $\Delta_\pi$ for a minimal
Weierstrass model of $E_L$ over $\O_\pi$ satisfies
$\ord_\pi(\Delta_\pi)\equiv e\cdot\ord_{\TT(\pi)}(\Delta_{\TT(\pi)})\pmod{12}$,
hence the Kodaira type of $E_L$ over $\pi$ is completely determined by
the Kodaira type of $E$ over $\TT(\pi)$ and table \ref{table::kodaira}.

Aside from the fact that one can use pullbacks to generate new elliptic
surfaces from old, the other important role they play lies in the fact that
\emph{any} elliptic curve over $L$ with non-constant $j$-invariant can
be written as a quadratic twist of the pullback of the `versal' elliptic
curve $E/K$ with affine model
\[
    y^2 = x^3 - \frac{108t}{t-1728}x + \frac{432t}{t-1728}.
\]
One can easily verify that this elliptic curve has $j$-invariant $t$, hence
for an elliptic curve over $L$ one can take $\TT$ to be the $j$-invariant and
use lemma~\ref{lemma::twist} to infer that an appropriate quadratic
twist of the pullback will be the original elliptic curve over $L$.  We remark that if $\sqrt{2}\not\in\F_q$, this model is the twist of the model in \cite[proof of prop.~III.1.4]{S} by the quadratic extension $K(\sqrt{2})/K$.  The latter model has split-multiplicative reduction at $t=\infty$ while our model forces $\varepsilon(E/K)=-1$.  In both cases the $L$-function has degree one and thus in our model we have $L(T,E/K)=1-qT$.  One can verify that the point $P=(4,8)$ lies on $E$ and has height $1/2$, thus the Mordell-Weil and analytic ranks of $E$ are both one.

\subsection{$L$-functions of Pullbacks}\label{sec::l-pull}

We continue the notation of the previous section and fix an elliptic curve
$E/K$ and a pullback $E_L/L$.  If $U_L$, $M_L$, and $A_L$ are the primes
over which $E_L$ has good, multiplicative, and additive reduction
respectively, then the $L$-function $L(T,E_L/L)$ has Euler product expansion
\begin{equation*}
    L(T,E_L/L) =
    \prod_{\pi\in|U_L|}L(T^{\deg(\pi)},\E_L/\F_\pi)^{-1}
    \cdot \!\!\prod_{\pi\in M_L^+}(1-T^{\deg(\pi)})^{-1}
    \cdot \!\!\prod_{\pi\in M_L^-}(1+T^{\deg(\pi)})^{-1}.
\end{equation*}
As in the case of quadratic twists, if one has computed enough information
for $E/K$, then it is relatively cheap to compute most of the Euler factors
of $L(T,E_L/L)$.  More precisely, if $E_L$ has good reduction over $\pi$
and if $E$ has good reduction over $\mathfrak{p}=\TT(\pi)$, then
\[
    L(T,\E_L/\F_\pi) = 1 - a_{\mathfrak{p}^f}T + q^{\deg(\pi)}T^2,
\]
where $f_\pi$ is the inertia degree of $\pi$ over $\mathfrak{p}$ and
$a_{\mathfrak{p}^{f_\pi}}$ can be determined by the expansion (\ref{eqn1}).  In
practice, it is easier to keep track of $a_{\mathfrak{p}^n}$ for several $n$
rather than use (\ref{eqn1}) directly because, among other reasons, it is
difficult to compare elements of $\F_{\mathfrak{p}}$ with the corresponding
subfield of $\F_\pi=\F_{\mathfrak{p}^{f_\pi}}$.  Nonetheless, the additional work
one must do is small since `most' elements of $\F_{q^n}$ have degree $n$
over $\F_q$, and the upshot is that most of the work of computing $b_n$ in
the corresponding expansion (\ref{eqn3}) for $L(T,E_L/L)$ is the cost of
explicitly evaluating the map $\theta:\P^1(\F_{q^n})\to\P^1(\F_{q^n})$ for all
elements in the domain.


\section{\bigellff}\label{sec::ellff}

The discussion in section \ref{sec::l_fcn} above naturally leads to a na\"ive algorithm to compute the $L$-function of a non-isotrivial elliptic curve defined over a function field via counting points on a finite number of its fibers. Moreover, if sufficiently many Euler factors have been computed for the versal elliptic curve, one can realize a given elliptic curve as a pullback and quadratic twist and use the results of section \ref{sec::twist} -- \ref{sec::l-pull} to significantly reduce the number of fibers on which one needs to count points. The authors have written a standalone \cpp~ library called \ellff~ built upon Shoup's Number Theory Library \cite{NTL}, which can be added as a module to the free open-source mathematics software system Sage \cite{Sage}. The package allows anyone to efficiently compute these $L$-functions on their own.\footnote{The library currently only allows for characteristic not 2 or 3, though handling these cases is straightforward and will be addressed in a future release.}

Internally, the library uses tables, computed on demand, to represent Euler factors.  If one asks for the $L$-function of a curve, then the library demands the minimal number of tables necessary.  One may also demand and manipulate the tables directly, e.g.~in order to study how the sizes of special fibers vary.  The library uses a database in order to reduce the complexity of computing a table, e.g.~by returning a previously calculated copy of the table or twisting a table for another curve with the same $j$-invariant.  A user whose database has the appropriate tables for the versal curve will benefit from such reductions, and thus we have made available a modest collection for download.  The user may also save their own tables in the database in order to facilitate calculating tables for families of curves.  For more information and setup instructions, see \texttt{http://ellff.sagemath.org}.



\section{Computations}\label{sec::compute}

Using the discussion from section \ref{sec::ec}, a database of $L$-functions was amassed for the family of quadratic twists of the following four elliptic curves (with notation consistent with that found in \cite{MP}):
\begin{equation*}\label{x222}
X_{222}: y^2  = x^3-27(t^4-t^3+t^2)x + 27(2t^6-3t^5-3t^4+2t^3),
\end{equation*}
\begin{equation*}\label{x211}
X_{211}: y^2 = x^3 - 27t^4x+54t^5(t-2),
\end{equation*}
\begin{equation*}\label{x321}
X_{321}: y^2 = x^3 - 108t^3(4t-3)x + 432t^5(8t-9),
\end{equation*}
\begin{equation*}\label{x431}
X_{431}: y^2 = x^3 - t^3(27t-24)x + t^4(54t^2-72t+16).
\end{equation*}
These are the only elliptic curves, up to isogeny, over $\F_q(t)$ with $(q,6)=1$ such that $\#M=2$ and $\#A=1$. They are normalized so that $\infty\in M^+$, $t\in A$, and $t-1\in M$, forcing the $L$-function to be trivial for each of the curves. Note that the first curve is the Legendre curve,\footnote{Strictly speaking, $X_{222}$ is a twist by $-t$ of the usual Legendre curve model $y^2=x(x-1)(x-t)$.} given by the alternative model
\begin{equation*}\label{e-leg}
X_{222}: y^2 = x(x+t)(x+t^2).
\end{equation*}

For each of these curves, we considered all prime $q\in\QQ=\{5,7,\dots,29\}$ and computed the $L$-functions of all the twists with bounded degree. The bound on the degree was determined by considerations on computational feasibility and depended on the size of the field of constants. The following table lists the number of twists over $\F_q$ of degree $d$ in each family $\FF_d$. This number does not depend on which of the four curves above one considers. A blank entry in the table denotes that the $L$-functions for all twists for the given $d$ and $q$ were not determined due to the computation requiring an excessive amount of time.

\begin{table}[hbt]
\renewcommand{\arraystretch}{1.25}
\begin{center}
\begin{tabular}{cr@{\hspace{.4cm}}r@{\hspace{.4cm}}r@{\hspace{.4cm}}r@{\hspace{.4cm}}r@{\hspace{.4cm}}r@{\hspace{.4cm}}r@{\hspace{.4cm}}r}\hline
 & 5 & 7 & 11 & 13 & 17 & 19 & 23 & 29 \\ \noalign{\hrule height 2pt}
$\#\FF_1$ & 3 & 5 & 9 & 11 & 15 & 17 & 21 & 27 \\
$\#\FF_2$ & 13 & 31 & 91 & 133 & 241 & 307 & 463 & 757 \\
$\#\FF_3$ & 71 & 227 & 1019 & 1751 & 4127 & 5867 & 10691 & 22007 \\
$\#\FF_4$ & 345 & 1573 & 11181 & 22729 & 70113 & 111421 & 213762 & 638121 \\
$\#\FF_5$ & 1739 & 11033 & 123029 & 295523 \\
$\#\FF_6$ & 8677 & 77203 \\
$\#\FF_7$ & 43407 \\
$\#\FF_8$ & 217009 \\
$\#\FF_9$ & 1085075 \\ \noalign{\hrule height 2pt}
All & 1356339 & 90072 & 135329 & 320147 & 74496 & 117612 & 224937 & 660912 \\ \hline
\end{tabular}
\caption{Number of Twists in $\FF_d$ for $q\in\QQ$}
\label{table::fd-size}
\end{center}
\end{table}

\subsection{Goldfeld's Conjecture}

In 1979 Goldfeld \cite{G} conjectured an average value for the analytic rank of a family of quadratic twists of a fixed elliptic curve $E/\Q$:
\begin{conjecture}[Goldfeld]\label{conj::goldfeld}
For $D$ a discriminant,
\begin{equation}\label{g-eqn}
\lim_{D\rightarrow\infty} \frac{\sum_{|d|<D} r(E_d)}{\#\{d:|d|<D\}} = \frac{1}{2}
\end{equation}
where $r(E_d)$ is the order of vanishing at $s=1$ of the $L$-function of the quadratic twist $E_d/\Q$.
\end{conjecture}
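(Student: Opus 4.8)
The plan is to prove the conjecture by partitioning the family of twists according to the sign of the functional equation and then showing that, outside a density-zero exceptional set, each twist achieves the minimal analytic rank permitted by its sign. For each fundamental discriminant $d$, modularity provides an entire $L$-function $L(E_d,s)$ with a functional equation relating $s\leftrightarrow 2-s$ and root number $\varepsilon(E_d)\in\{\pm1\}$; this forces $r(E_d)$ to be even when $\varepsilon(E_d)=+1$ and odd when $\varepsilon(E_d)=-1$. If the root numbers equidistribute and if $100\%$ of the even-sign twists have rank $0$ while $100\%$ of the odd-sign twists have rank $1$, then the average rank is $\tfrac12(0)+\tfrac12(1)=\tfrac12$, as claimed. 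The argument therefore splits into an equidistribution statement for the signs and a \emph{minimalist} statement controlling the ranks.

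First I would establish equidistribution of the root numbers: writing $\varepsilon(E_d)$ as a product of local root numbers, the $d$-dependence is governed by a quadratic character, and a character-sum argument (analogous to Lemma~\ref{lem::sign-fe} in the function-field setting) shows that the proportion of $|d|<D$ with $\varepsilon(E_d)=+1$ tends to $\tfrac12$. Next I would invoke the explicit formula, expressing $r(E_d)=\ord_{s=1}L(E_d,s)$ as a sum over the nontrivial zeros tested against an even Schwartz function $\phi$, and then transforming the sum over zeros into a sum over primes weighted by $\widehat{\phi}$ and by the Fourier coefficients $a_p(E_d)=a_p(E)\,\chi_d(p)$. Averaging over $d$ and executing the character-sum in $d$ isolates a diagonal main term whose size is dictated by the symmetry type of the family. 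By the Katz--Sarnak philosophy the family of quadratic twists of a fixed curve has orthogonal symmetry, and the resulting one-level density matches the orthogonal ensemble, which would pin the average order of vanishing to $\tfrac12$.

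The hard part will be this last step, and the obstruction is genuine rather than technical. The explicit-formula computation yields the correct main term only when $\widehat{\phi}$ is supported in a window whose width is limited by the off-diagonal (character-sum) error terms; unconditionally one can admit only test functions of small support, which suffices for an \emph{upper} bound on the average rank but not for the exact value $\tfrac12$. Enlarging the support -- equivalently, controlling the variance of the family and ruling out a positive-density set of high-rank twists -- requires the Riemann Hypothesis for each $L(E_d,s)$ together with the higher $n$-level density conjectures, neither of which is available over $\Q$. In particular, bounding the tail contribution of the sporadic high-rank twists so that it does not inflate the average is precisely the minimalist conjecture, which remains open. Thus, while the roadmap above reduces Goldfeld's conjecture to equidistribution of signs plus an orthogonal-symmetry density statement, completing it unconditionally lies beyond current technology; this is exactly why the present paper supplies empirical rather than deductive support, and why it pursues the more tractable function-field analogue in place of the statement over $\Q$ itself.
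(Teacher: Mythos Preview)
The statement you are addressing is a \emph{conjecture}, and the paper does not supply a proof of it. Goldfeld's conjecture over $\Q$ is open; the paper quotes it as motivation, formulates a function-field analogue (Conjecture~\ref{conj::goldfeld-ff}), proves only the lower bound $\liminf\geq\tfrac12$ in the function-field setting via equidistribution of root numbers, and otherwise offers numerical evidence. There is therefore no ``paper's own proof'' to compare against.

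Your write-up is not a proof either, and to your credit you say so: the final paragraph correctly identifies that enlarging the support in the one-level density argument requires GRH and that ruling out a positive-density set of high-rank twists is precisely the open minimalist conjecture. What you have produced is a heuristic roadmap (sign equidistribution plus orthogonal symmetry plus density-zero exceptions), not a proof. That roadmap is reasonable as exposition, but it should not be labeled a proof proposal: the gap you name in the last paragraph is the entire content of the conjecture, not a technical detail to be filled in later. If the assignment was to reproduce the paper's argument for this statement, the correct answer is that the paper gives none---it states the conjecture and moves on to the function-field analogue and data.
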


Goldfeld's conjecture concerns the analytic rank of an elliptic curve, though it is important to note that many authors replace the analytic rank with the algebraic rank (i.e. the rank, as a free $\Z$-module, of the group $E(K)$ of $K$-rational points on $E$ modulo torsion), invoking the Birch and Swinnerton-Dyer conjecture if needed. For a survey of results on the average value and variation of the (algebraic) ranks of elliptic curves in a family of quadratic twists in the number field setting, see \cite{RS}. A more recent paper \cite{BMSW} provides data for the average value and distribution of the analytic ranks of elliptic curves over $\Q$ \emph{ordered by conductor}. Thus the reader should be wary of concluding that the data presented therein either supports or undermines Goldfeld's conjecture, which considers the family of quadratic twists of a fixed elliptic curve and not all elliptic curves with bounded conductor.

Goldfeld's conjecture has a direct analog in the function field setting: for an elliptic curve $E$ over $K$, we set its analytic rank $r$ to be the order of vanishing of $L(T,E/K)$ at $T=1/q$. Instead of considering all twists by $d$ with $|d|<D$, we consider those twists in $\FF^*_D = \bigcup_{d\leq D} \FF_d$ and let $D$ grow to infinity as before:
\begin{conjecture}\label{conj::goldfeld-ff}
For $D$ a positive number,
\begin{equation}
\lim_{D\rightarrow\infty} \frac{\sum_{f\in \FF_D^*} r(E_f)}{\#\FF_D^*} = \frac{1}{2}
\end{equation}
where $r(E_f)$ is the order of vanishing at $s=1$ of the $L$-function of the quadratic twist $E_f/\F_q(t)$.
\end{conjecture}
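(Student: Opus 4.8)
The plan is to separate the analytic rank into its parity and its ``excess'' and to control the two contributions by different means. The functional equation (\ref{eqn::fe}) forces $r(E_f)\equiv\tfrac{1-\varepsilon(E_f/K)}{2}\pmod 2$, so I would write
\[
\sum_{f\in\FF_D^*} r(E_f) \;=\; \sum_{f\in\FF_D^*}\frac{1-\varepsilon(E_f/K)}{2} \;+\; \sum_{f\in\FF_D^*}\Big(r(E_f)-\frac{1-\varepsilon(E_f/K)}{2}\Big),
\]
where the first sum records the parity-forced rank and each summand of the second is a non-negative even integer measuring the amount by which a twist exceeds its forced parity. After dividing by $\#\FF_D^*$, the limit $1/2$ in Conjecture~\ref{conj::goldfeld-ff} is equivalent to the conjunction of two assertions: (i) the root numbers equidistribute, i.e. $\#\{f\in\FF_D^*:\varepsilon(E_f/K)=-1\}/\#\FF_D^*\to 1/2$; and (ii) the excess ranks are negligible on average, i.e. the second sum divided by $\#\FF_D^*$ tends to $0$. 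Note that (i) together with the trivial bound $r(E_f)\geq\tfrac{1-\varepsilon(E_f/K)}{2}$ already yields the lower bound $\liminf\geq 1/2$; the content of the full conjecture is (i) \emph{and} (ii).

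For (i) I would start from the root-number formula of Lemma~\ref{lem::sign-fe}. Applying it to $E_f$ and substituting the twisting laws for $M_f^\pm$ and $A_f$ from Section~\ref{sec::twist}, the sign $\varepsilon(E_f/K)$ factors as a fixed constant times a product of Legendre symbols $\Leg{f}{\pi}$ over the finitely many bad primes, times a factor depending only on $\deg(f)\bmod 2$. Summing this over square-free $f\in\FF_D^*$ reduces (i) to showing that a short fixed product of quadratic characters does not correlate with the square-free indicator on polynomials of growing degree. This is exactly the type of estimate supplied by a function-field large sieve, and it is the mechanism behind Kowalski's argument cited in the abstract; this step is accessible in the $D\to\infty$ limit.

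For (ii) I would pass to the cohomological description of the $L$-function: $r(E_f)$ equals the multiplicity of $q$ as an eigenvalue of geometric Frobenius on $H^1$ of $\P^1$ with coefficients in the middle-extension sheaf attached to $\E_f$. Organizing the twists into one family parametrized by square-free $f$, I would compute the geometric monodromy group and try to show it is as large as the symmetry of the situation permits: cup product combined with the symplectic pairing on the rank-two sheaf $R^1\pi_*$ makes the pairing on $H^1$ \emph{symmetric}, so the monodromy is constrained to an orthogonal group, and one expects it to fill that group. Deligne's equidistribution theorem would then distribute the Frobenius conjugacy classes by Haar measure, and the known statistics of the orthogonal ensembles force the central eigenvalue $q$ to occur with its minimal multiplicity — $0$ or $1$ according to the determinant, i.e. the root number — for a density-one set of parameters, which is precisely (ii).

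The hard part is that Conjecture~\ref{conj::goldfeld-ff} holds $q$ fixed and lets the degree $D\to\infty$, whereas the equidistribution machinery that would deliver (ii) operates in the opposite, orthogonal limit: degree fixed and $q\to\infty$. In the $q\to\infty$ regime the monodromy computation and Deligne's theorem apply verbatim and control the excess rank uniformly, but in the arithmetically relevant $q$-fixed, $D\to\infty$ regime there is no known substitute, and bounding the excess-rank sum — equivalently, ruling out a positive proportion of twists with rank $\geq 2$ (even parity) or $\geq 3$ (odd parity) as $D$ grows — is exactly the obstruction that keeps the statement conjectural. This is why (i) is within reach by sieve methods and only the lower bound $\geq 1/2$ is unconditional, while the matching upper bound remains open and we resort to the numerical evidence of Section~\ref{sec::compute}.
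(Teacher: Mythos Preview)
Your proposal is correct in spirit and matches the paper's own treatment: the statement is a \emph{conjecture}, and the paper does not prove it either. What the paper does prove is exactly your assertion (i) and the resulting lower bound $\liminf\geq 1/2$ (its Corollary following Kowalski's proposition), while for (ii) it offers only the numerical evidence of Section~\ref{sec::compute}. Your decomposition into parity-forced rank plus non-negative excess, and your identification of the $q$-fixed versus $q\to\infty$ obstruction for (ii), are precisely the structure underlying the paper's discussion.

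One small difference on (i): you invoke a function-field large sieve, but the paper's actual argument is more elementary and more explicit. Proposition~4.4 shows $\varepsilon(E_f/K)=\varepsilon_d\cdot\varepsilon(E/K)\cdot\Leg{f}{N}$ where $N$ is the product of the finite multiplicative primes and $\varepsilon_d$ depends only on $\deg(f)$; Kowalski's proposition then shows $\sum_{f\in\FF_d}\Leg{f}{N}=o(\#\FF_d)$ by straight M\"obius inversion (to remove the square-free and coprimality conditions) followed by the vanishing of the complete character sum $\sum_{\alpha\bmod N}\Leg{\alpha}{N}=0$ once $\deg(f)\geq\deg(N)$. No sieve inequality is needed---the cancellation comes from orthogonality of characters---and the bound obtained is $|A_d|\ll q^{\deg(N)+d/2}$ against $\#\FF_d\gg q^d$. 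Otherwise your analysis and the paper's are aligned.
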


One would like a lower bound on the average analytic rank over the family of interest $\FF_d$ analogous to that found in Proposition 1 on p. 114 of \cite{G}. Unlike this proposition where the average is taken over all discriminants, determining the average over $\FF_d$ is non-trivial. But using the functional equation, it is clear that if in the limit the average of the sign of the functional equation over $\FF_d$ is 0, the average analytic rank over $\FF_d$ is at least $1/2$. We next prove such a lower bound using this line of argument.

We begin by letting $M\cap\A^1=\{\pi_1,\dots\pi_r\}$ be the finite primes where $E/K$ has multiplicative reduction and setting $N=\pi_1\cdots\pi_r$. 

\begin{prop}
There exists $\vareps_d\in\{\pm 1\}$ such that for all $f\in\FF_d$,
\begin{equation}\label{eqn::sign-twist}
\vareps(E_f/K) = \vareps_d \cdot \vareps(E/K) \cdot \Leg{f}{N},
\end{equation}
where $\Leg{\cdot}{N}$ is the Jacobi symbol of $N$.
\end{prop}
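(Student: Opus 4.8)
The plan is to compute $\vareps(E_f/K)$ using the explicit formula of Lemma~\ref{lem::sign-fe}, namely $\vareps(E_f/K) = (-1)^{\#M_f^+}\cdot\prod_{\pi\in A_f}\Leg{\epsilon_\pi^{(f)}}{\pi}$, and to show that every factor which genuinely varies with $f\in\FF_d$ contributes exactly $\Leg{f}{N}$ (up to a global constant $\vareps_d$ depending only on $d$), while the remaining factors recombine into $\vareps(E/K)$. First I would dispose of the prime at infinity: by the discussion in section~\ref{sec::twist}, whether $\infty$ lies in $M_f$ or $A_f$ and, if the former, its splitting behavior, depends only on the parity of $d=\deg(f)$ (when $d$ is even $\E_f$ and $\E$ agree over $\F_\infty$; when $d$ is odd the Kodaira symbol toggles between $S$ and $S^*$ in a way fixed by $d$). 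So the local contribution at $\infty$ is a constant once $d$ is fixed; fold it into $\vareps_d$.

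Next I would handle the finite primes, splitting them into three groups. For $\pi\in|\A^1|$ with $\pi\nmid f$ and $\pi\notin M\cup A$: such $\pi$ is a prime of good reduction for both $E$ and $E_f$ and contributes nothing to either sign. For $\pi\in A\cap\A^1$: section~\ref{sec::twist} records that $M_f\cap\A^1 = M\cap\A^1$ and $A_f\cap\A^1 = (A\cap\A^1)\cup\{\pi\mid f\}$, and moreover $\E_f/\F_\pi$ and $\E/\F_\pi$ are isomorphic over $\F_\pi$ for $\pi\in A$ — wait, more carefully, the scalar-twist discussion says $\E_{\alpha f}/\F_\pi\cong\E_f/\F_\pi$ for $\pi\in A_f$, but for comparing $\E_f$ to $\E$ one must track how $f$ (not just its class mod squares in $\F_q$) twists the additive fiber. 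The key point is that twisting $\E/\F_\pi$ by $f$ changes the local root number at $\pi\in A$ by the factor $\Leg{f}{\pi}$ (this is the standard behavior of the local root number of an additive-reduction fiber under quadratic twist, which one reads off from the dependence of $\epsilon_\pi$ and the Legendre symbol $\Leg{\epsilon_\pi}{\pi}$ on the twist). For $\pi\in M\cap\A^1=\{\pi_1,\dots,\pi_r\}$: the splitting behavior flips exactly when $\Leg{f}{\pi}=-1$, so the factor $(-1)^{\#M_f^+}$ differs from $(-1)^{\#M^+}$ by $\prod_{i=1}^r\Leg{f}{\pi_i}=\Leg{f}{N}$. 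For $\pi\mid f$, $\pi\in|\A^1|$: these are the new additive primes of $\E_f$, of Kodaira type $\I_0$-becomes-$\I_0^*$ (a quadratic twist of good reduction), and each contributes a local root number of $\Leg{\epsilon_\pi}{\pi}=\Leg{-1}{\pi}$ with $\epsilon_\pi=-1$; I need to check that the product $\prod_{\pi\mid f}\Leg{-1}{\pi}$ over the prime divisors of $f$ is itself expressible in terms of $\Leg{f}{N}$ and $d$ — and here the input is that $E/K$ has good reduction (not in $N$) at these primes, so this product does not interact with $N$, and by quadratic reciprocity / the product formula for Jacobi symbols in $\F_q[t]$ it reduces to something controlled by $d$ and $q$ alone.

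Collecting: $\vareps(E_f/K) = [\text{const depending on }d]\cdot(-1)^{\#M^+}\cdot\prod_{\pi\in A\cap\A^1}\Leg{\epsilon_\pi}{\pi}\cdot\Leg{f}{N}$, and the middle factors are precisely $\vareps(E/K)$ (possibly up to the $\infty$-contribution already absorbed into $\vareps_d$), giving $\vareps(E_f/K)=\vareps_d\cdot\vareps(E/K)\cdot\Leg{f}{N}$. I expect the main obstacle to be the bookkeeping at the new additive primes $\pi\mid f$: one must verify that twisting a good-reduction fiber by a uniformizer produces type $\I_0^*$ with the correct $\epsilon_\pi=-1$, compute $\prod_{\pi\mid f}\Leg{-1}{\pi}$, and confirm via the reciprocity law in $\F_q[t]$ that this product depends only on $\deg(f)$ and $q$ — not on the particular $f$ — so that it can legitimately be absorbed into $\vareps_d$. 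The contribution at $\infty$ is a secondary but similar subtlety; everything else is a direct application of Lemmas~\ref{lem::sign-fe} and the reduction-type computations already carried out in section~\ref{sec::twist}.
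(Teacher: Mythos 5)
Your overall route is the same as the paper's: apply Lemma~\ref{lem::sign-fe} to $E_f$ and compare local contributions place by place, absorbing the contribution at $\infty$ and at the new additive fibers $\pi\mid f$ into a constant $\vareps_d$, and extracting $\Leg{f}{N}$ from the flipping of split/non-split multiplicative reduction. However, your treatment of the \emph{original} additive primes $\pi\in A\cap\A^1$ contains a genuine error: you assert that twisting by $f$ multiplies the local root number at such $\pi$ by $\Leg{f}{\pi}$. That is false, and if it were true the proposition itself would fail, since the right-hand side would then carry an extra factor $\prod_{\pi\in A\cap\A^1}\Leg{f}{\pi}$ which varies with $f$ and cannot be absorbed into $\vareps_d$. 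Because $f\in\FF$ is by definition coprime to $\Delta$, $f$ is a unit at every $\pi\in A\cap\A^1$, so locally the twist is either trivial (when $f$ is a square mod $\pi$) or by the unramified quadratic extension; since $(q,6)=1$ the reduction there is tame, a unit twist leaves the valuation of the minimal discriminant, hence the Kodaira symbol and $\epsilon_\pi$ of Table~\ref{table::kodaira}, unchanged, and (equivalently, because the conductor exponent is $2$) an unramified quadratic twist does not change the local root number. So the correct statement, and the one the paper uses, is that these places contribute exactly the same factor $\Leg{\epsilon_\pi}{\pi}$ for $E_f$ as for $E$. Note that your own ``Collecting'' line already tacitly assumes this, since it retains $\prod_{\pi\in A\cap\A^1}\Leg{\epsilon_\pi}{\pi}$ with no $\Leg{f}{\pi}$ attached, so as written your argument is internally inconsistent.

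The point you flag as still to be checked --- that the new additive fibers at $\pi\mid f$ contribute a sign depending only on $d$ and $q$ --- is fine and needs no reciprocity: each such $\pi$ is a prime of good reduction for $E$ (again because $f$ is prime to $\Delta$), the twisted fiber has type $\I_0^*$ with $\epsilon_\pi=-1$, and $\prod_{\pi\mid f}\Leg{-1}{\pi}=+1$ exactly when $q^{d}\equiv 1\pmod 4$, because $\Leg{-1}{\pi}=+1$ iff $q^{\deg\pi}\equiv 1\pmod 4$ and $d=\sum_{\pi\mid f}\deg\pi$ for square-free $f$; this is precisely the paper's Case~2. Your handling of $\infty$ (constant once $d$ is fixed, using that $f$ is monic) and of the multiplicative primes (splitness flips at $\pi_i$ exactly when $\Leg{f}{\pi_i}=-1$, so $(-1)^{\#M_f^+}$ differs from $(-1)^{\#M^+}$ by the Jacobi symbol $\Leg{f}{N}$) matches the paper's Cases~4 and~3. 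Repair the additive-prime step as above and the proof goes through as in the paper.
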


\begin{proof}
We proceed by examining the contribution to the sign from the places of bad reduction.

\emph{Case 1:} If $\pi\in A_1\cap\A^1$, $E/K$ and $E_f/K$ have the same Kodaira type at $\pi$. Thus there is no change to the local contribution from $\vareps(E/K)$ to $\vareps(E_f/K)$ for such $\pi$.

\emph{Case 2:} If $\pi$ is a finite prime that divides $f$, then $E_f$ has type $I_0^*$ reduction over $\pi$.
Thus the contribution to the sign in this case is given by
$$\Leg{\epsilon_{\pi,f}}{\pi}=\Leg{-1}{\pi}\equiv q^{\deg\pi}\pmod{4},$$
implying the total contribution $\epsilon_f$ to the sign coming from those $\pi\in A_f-A$ satisfies
$$\epsilon_f\equiv q^d\pmod{4}.$$
Thus the change in the local contribution from $\vareps(E/K)$ to $\vareps(E_f/K)$ from these primes depends only on $d$.

\emph{Case 3:} For $\pi\in M_f\cap\A^1=M\cap\A^1$, the splitness at $\pi$ changes if and only if $\Leg{f}{\pi}=-1$. Thus the total change in the local contribution from $\vareps(E/K)$ to $\vareps(E_f/K)$ is $\Leg{f}{N}$.

\emph{Case 4:} For $\pi=\infty$, the reduction of $E/K$ and $E_f/K$ are the same if $d$ is even by the discussion in section \ref{sec::twist}. If $d$ is odd, then the reduction depends only the leading coefficient of $f$. Thus the local contribution to the sign is independent of $f\in\FF_d$, so the change in the local contribution from $\vareps(E/K)$ to $\vareps(E_f/K)$ for $\pi=\infty$ depends only on $d$.

These cases exhaust all possible changes to the sign of the functional equation introduced by twisting, yielding equation \ref{eqn::sign-twist}. \end{proof}

\begin{corollary}\label{cor::sign-jac}
$$\frac{1}{\#\FF_d}\sum_{f\in\FF_d} \vareps(E_f/K) = \frac{\vareps_d\cdot\vareps(E/K)}{\#\FF_d} \sum_{f\in\FF_d}\Leg{f}{N}.$$
\end{corollary}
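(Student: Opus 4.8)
The plan is to prove the Corollary by observing that it is an immediate algebraic consequence of the Proposition, together with the linearity of summation. First I would take equation (\ref{eqn::sign-twist}) from the Proposition, which asserts that for every $f\in\FF_d$ we have $\vareps(E_f/K) = \vareps_d\cdot\vareps(E/K)\cdot\Leg{f}{N}$, and sum both sides over all $f\in\FF_d$. The left-hand side becomes $\sum_{f\in\FF_d}\vareps(E_f/K)$ directly. On the right-hand side, the quantity $\vareps_d\cdot\vareps(E/K)$ is a constant (an element of $\{\pm 1\}$) that is independent of $f$, so it factors out of the sum, leaving $\vareps_d\cdot\vareps(E/K)\cdot\sum_{f\in\FF_d}\Leg{f}{N}$.

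Next I would divide both sides of the resulting identity by $\#\FF_d$, which is a positive integer (indeed, $\FF_d$ is finite and nonempty for the degrees under consideration, as recorded in Table~\ref{table::fd-size}), to obtain
\[
\frac{1}{\#\FF_d}\sum_{f\in\FF_d}\vareps(E_f/K) = \frac{\vareps_d\cdot\vareps(E/K)}{\#\FF_d}\sum_{f\in\FF_d}\Leg{f}{N},
\]
which is exactly the claimed formula. No further manipulation is required.

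Since this is a one-line deduction from the preceding Proposition, there is essentially no obstacle; the only thing worth flagging is the implicit use of the fact, already established in the discussion preceding Lemma~\ref{lemma::twist} and its corollaries, that the sign $\vareps(E_f/K)$ is well-defined for each $f\in\FF_d$ — which holds because each $E_f$ is a genuine quadratic twist with non-constant $j$-invariant, so Lemma~\ref{lem::sign-fe} applies. One could also remark that the real content lies in the Proposition, and that the Corollary merely repackages it in a form convenient for the subsequent estimate of the average analytic rank: the left-hand side is the average sign of the functional equation over $\FF_d$, and the right-hand side reduces its evaluation to a character-sum estimate for $\sum_{f\in\FF_d}\Leg{f}{N}$, which is the quantity one would bound next.
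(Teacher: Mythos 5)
Your argument is correct and is exactly the (implicit) deduction in the paper: the Corollary follows by summing the identity of the Proposition over $f\in\FF_d$, pulling out the constant $\vareps_d\cdot\vareps(E/K)$, and dividing by $\#\FF_d$. Nothing further is needed.
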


Corollary \ref{cor::sign-jac} reduces the average of the sign of the functional equation to the average of the Jacobi symbol over $\FF_d$. The following proposition is due to private correspondence with E. Kowalski:
\begin{prop}[Kowalski]
With notation as above, we have 
$$\lim_{d\rightarrow\infty}\frac{\sum_{f\in\FF_d} \Leg{f}{N}}{\#\FF_d} = 0.$$
\end{prop}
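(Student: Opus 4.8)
The plan is to estimate the character sum $S_d = \sum_{f\in\FF_d}\Leg{f}{N}$ directly, exploiting that $f\mapsto\Leg{f}{N}$ is a non-trivial (since $N = \pi_1\cdots\pi_r$ has degree $r\ge 1$ and is not a square in $\O_K$) multiplicative character modulo $N$, and that $\FF_d$ is the set of monic squarefree polynomials of degree $d$ coprime to $\Delta$. First I would dispose of the coprimality-to-$\Delta$ condition: since $N\mid\Delta$, any $f$ coprime to $N$ that shares a factor with $\Delta$ contributes a value of $\Leg{f}{N}$ that depends only on $f\bmod N$, so by a standard inclusion–exclusion over the (finitely many) prime divisors of $\Delta/N$ (each contributing a sum of the same shape with $N$ fixed and a shorter degree), it suffices to prove the asymptotic for the sum of $\Leg{f}{N}$ over \emph{all} monic squarefree $f$ of degree $d$, and then separately over all monic $f$ of degree $d$ (dropping squarefreeness is handled by the usual Möbius sieve $f = g h^2$, which again reduces to shorter sums of the same type).

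Second, and this is the arithmetic core, I would bound $\sum_{f\text{ monic, }\deg f = d}\Leg{f}{N}$ using the orthogonality/Dirichlet-series machinery for function fields. Write the associated $L$-function $L(u,\chi_N) = \sum_{f\text{ monic}}\Leg{f}{N}u^{\deg f}$, a Dirichlet $L$-function over $\F_q(t)$ for the non-trivial even character $\chi_N$ modulo $N$. By Weil's theorem (the Riemann Hypothesis for curves, equivalently for such $L$-functions over function fields), $L(u,\chi_N)$ is a polynomial in $u$ of degree $\deg N - 1$ whose inverse roots all have absolute value $\sqrt{q}$. Extracting the coefficient of $u^d$ gives
\begin{equation*}
\left|\sum_{\substack{f\text{ monic}\\ \deg f = d}}\Leg{f}{N}\right| \le \binom{\deg N - 1}{\text{something}}\,q^{d/2} \ll_N q^{d/2},
\end{equation*}
so this sum is $O(q^{d/2})$ with an implied constant depending only on $N$ (hence only on $E/K$). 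The same bound, with a worse but still $O(q^{d/2})$ constant, survives the Möbius sieve that reinstates the squarefree condition and the inclusion–exclusion that reinstates coprimality to $\Delta$, because each auxiliary sum has the form of a coefficient of a related $L$-function (twisted by a character of bounded conductor) and the number of terms in the sieve is bounded in terms of $N$ and $\Delta$ alone.

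Third, I would compare against the denominator: $\#\FF_d = \#\{\text{monic squarefree of degree }d\text{ coprime to }\Delta\}$ grows like $c_\Delta\, q^d$ for an explicit positive constant $c_\Delta$ (this is the standard count of squarefree polynomials, $q^d - q^{d-1}$ for $d\ge 2$, corrected by a positive local density factor at the primes dividing $\Delta$). Dividing, $|S_d|/\#\FF_d \ll_{E} q^{-d/2}\to 0$ as $d\to\infty$, which is the claim. The main obstacle is the second step: one must correctly identify $f\mapsto\Leg{f}{N}$ as a Dirichlet character of the right parity and conductor so that Weil's bound applies cleanly, and one must verify the character is non-trivial — here one uses that $N$ is squarefree of positive degree (it is a product of distinct monic irreducibles, the finite primes of multiplicative reduction), so $\Leg{\cdot}{N}$ is a genuine quadratic character, not a square, and its $L$-function is a non-constant polynomial. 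Everything else — the inclusion–exclusion for $\Delta$, the Möbius sieve for squarefreeness, and the count of $\#\FF_d$ — is routine bookkeeping with geometric-series error terms.
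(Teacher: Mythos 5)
Your proposal is correct in outline and follows the same overall route as the paper's proof: sieve out squarefreeness by M\"obius and coprimality to $\Delta$ by inclusion--exclusion, reduce to complete sums of the quadratic character $f\mapsto\left(\frac{f}{N}\right)$ over all monic polynomials of a fixed degree, and compare with $\#\FF_d\gg q^d$. The one substantive difference is the input you use for those complete sums: you invoke Weil's Riemann Hypothesis for the Dirichlet $L$-function $L(u,\chi_N)$ to get a bound $O_N(q^{e/2})$, whereas the paper needs only the elementary fact that $\sum_{\deg f=e}\left(\frac{f}{N}\right)=0$ as soon as $e\ge\deg N$ (each residue class mod $N$ contains exactly $q^{e-\deg N}$ monic polynomials of degree $e$, and the character is nontrivial since $N$ is squarefree of positive degree) --- equivalently, the mere polynomiality of $L(u,\chi_N)$ in degree $<\deg N$, with no appeal to RH. So your route is valid but uses heavier machinery than necessary. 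One bookkeeping claim in your sketch is wrong as stated: the number of terms in the squarefreeness sieve is \emph{not} bounded in terms of $N$ and $\Delta$ alone, since the M\"obius variable $g$ ranges over all monic polynomials of degree $\le d/2$, roughly $q^{d/2}$ of them (only the inclusion--exclusion over divisors of $\Delta$ has boundedly many terms). The estimate survives anyway: either observe, as the paper does, that the inner sum vanishes unless $d-2\deg g-\deg h<\deg N$, which confines $\deg g$ to an interval of bounded length near $d/2$ and yields $|A_d|\ll_{N,\Delta}q^{\deg N + d/2}$; or apply your Weil bound to every inner sum and sum trivially over $g$, which gives the slightly weaker but still sufficient $O_N(d\,q^{d/2})=o(q^d)$. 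With that repair your argument is complete.
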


\begin{proof}
Unless stated otherwise, we write $f,g,h\in\F_q[t]$ for arbitrary monic polynomials.  Write $\Delta=NN'$ and let $\chi_\Delta(f)$ be the characteristic function of those $f$ that are square-free and coprime to $\Delta$. Setting
$$ A_d = \sum_{f\in\FF_d} \Leg{f}{N},$$
we then have
$$
	A_d = \!\!\!\!\!
		\sum_{\scriptsize \deg(f)=d} \chi_\Delta(f)\Leg{f}{N}
		\quad \text{and} \quad
		\#\FF_d =\!\!\!\!\! \sum_{\scriptsize \deg(f)=d} \chi_\Delta(f).
$$
Let $\mu(\cdot)$ be the M\"obius function for polynomials.  If $\deg(g)>0$, then $\sum_{h\mid g}\mu(h)=0$, and otherwise $\sum_{h\mid g}\mu(h)=1$.  Thus $f\mapsto \sum_{g^2\mid f}\mu(g)$ and $f\mapsto \sum_{h\mid(\Delta,f)} \mu(h)$ are the characteristic functions for square-free polynomials and polynomials coprime with $\Delta$ respectively, and hence
$$
	\chi_\Delta(f)
		= \sum_{\scriptsize g^2\mid f} \mu(g)
		  \sum_{h\mid(\Delta,f)} \mu(h).
$$
Note, if $(g,\Delta)\neq 1$, then the right sum over $h$ vanishes, hence we can restrict to $g$ such that $(g,\Delta)=1$.  In particular, if we substitute into the above expression for $A_d$ and rearrange terms we have
\begin{eqnarray*}
	A_d & = &
		\sum_{h\mid\Delta}\,\, \mu(h)\!\!
		\sum_{\deg(g)\leq \frac{d}{2},\ (g,\Delta)= 1} \mu(g)
		\sum_{g^2h\mid f,\ \deg(f)=d} \Leg{f}{N}.
\end{eqnarray*}
If we write $f=f_1g^2h$ in the innermost sum, then we have
$$
	A_d =
		\sum_{h\mid\Delta}\,\, \mu(h)\Leg{h}{N}
		\sum_{\deg(g)\leq\frac{d}{2},\ (g,\Delta)= 1} \mu(g)
		\sum_{\deg(f_1)=d-2\deg(g)-\deg(h)} \Leg{f_1}{N}.
$$
Moreover, if we write $B_e$ for the sum $B_e=\sum_{\deg(f)=e}\Leg{f}{N}$ and if we suppose $e\geq\deg(N)$, then 
$$
	B_e
		\ \ =\ \sum_{\alpha\in\F_q[t]/(N)} \Leg{\alpha}{N}
		\sum_{\scriptsize\begin{array}{c}\deg(f)=e\\f\equiv\alpha(\text{mod }N)\end{array}}\!\!\! \!\! 1\ 
		= \sum_{\alpha\in\F_q[t]/(N)} \Leg{\alpha}{N}
		q^{e-\deg(N)}
		\ \ =\  0
	$$
(because the last sum is a complete character sum).  Therefore if we write $e=d-2\delta-\deg(h)$ and suppose $e<\deg(N)$ we have
\begin{equation*}
	A_d =
		\sum_{h\mid\Delta}\,\, \mu(h)\Leg{h}{N}
		\sum_{\frac{1}{2}(d-\deg(N)-\deg(h))\leq\delta\leq\frac{d}{2}}\ \ 
		\sum_{\deg(g)=\delta,\ (g,\Delta)= 1} \mu(g)\  B_{d-2\delta-\deg(h)}.
\end{equation*}
Observe that for any $e,\delta\geq 0$, we have $|B_e|\leq q^e$ and $(\sum_{\deg(g)=\delta} 1)\leq q^\delta$, thus for $d\geq 1$
\begin{eqnarray*}
	|A_d| & \leq &
		\sum_{h\mid\Delta}\,\, \mu(h)
		\sum_{\frac{1}{2}(d-\deg(N)-\deg(h))\leq\delta\leq\frac{d}{2}} q^{d-2\delta-\deg(h)}
		\sum_{\deg(g)=\delta,\ (g,\Delta)= 1} 1 \\
	& \leq  &
		\sum_{h\mid\Delta}\,\, \mu(h)
		\sum_{\frac{1}{2}(d-\deg(N)-\deg(h))\leq\delta\leq\frac{d}{2}}\ \ q^{\deg(N)}
		\, q^{d/2} \\
	& \ll & q^{\deg(N)+d/2}
\end{eqnarray*}
where the implied constant depends on $\Delta$ and $N$. On the other hand, $\#\FF_d \gg q^d$, so 
$$\lim_{d\rightarrow\infty}\frac{|A_d|}{\#\FF_d} \ll \lim_{d\rightarrow\infty}q^{\deg(N)-d/2} = 0,$$
proving the propsition.
\end{proof}

This proposition then leads to the desired corollary:

\begin{corollary}
With notation as above, we have
$$\lim_{D\rightarrow\infty} \frac{\sum_{f\in \FF_D^*} r(E_f)}{\#\FF_D^*} \geq \frac{1}{2}.$$
\end{corollary}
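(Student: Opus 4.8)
The plan is to deduce the corollary from the previous proposition together with the functional equation and Corollary~\ref{cor::sign-jac}. The basic principle is the one already flagged in the text: the order of vanishing $r(E_f)$ of $L(T,E_f/K)$ at $T=1/q$ has the same parity as $\tfrac{1}{2}(N_f - \varepsilon(E_f/K) N_f)$ is not quite right, but more precisely, a polynomial $L(T,E_f/K)\in 1+T\Z[T]$ satisfying the functional equation \eqref{eqn::fe} with sign $\varepsilon(E_f/K)$ vanishes to odd order at the center $T=1/q$ when $\varepsilon(E_f/K)=-1$ and to even order (possibly zero) when $\varepsilon(E_f/K)=+1$. Hence $r(E_f)\geq 1$ whenever $\varepsilon(E_f/K)=-1$, and $r(E_f)\geq 0$ always. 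First I would record this parity statement, citing \eqref{eqn5} (the relation $c_n=\varepsilon(E/K)q^{2n-N}c_{N-n}$ applied to $E_f$) to justify that the functional equation forces $L(1/q,E_f/K)=0$ when the sign is $-1$.

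Next I would convert this pointwise inequality into an averaged one. For each $d$, write $\FF_d = \FF_d^- \sqcup \FF_d^+$ according to whether $\varepsilon(E_f/K)=-1$ or $+1$. Then
\begin{equation*}
\sum_{f\in\FF_d} r(E_f) \;\geq\; \#\FF_d^- \;=\; \frac{1}{2}\Big(\#\FF_d - \sum_{f\in\FF_d}\varepsilon(E_f/K)\Big).
\end{equation*}
By Corollary~\ref{cor::sign-jac} the sum $\sum_{f\in\FF_d}\varepsilon(E_f/K)$ equals $\varepsilon_d\,\varepsilon(E/K)\sum_{f\in\FF_d}\Leg{f}{N}$, and by Kowalski's proposition this quantity is $o(\#\FF_d)$ as $d\to\infty$. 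Dividing by $\#\FF_d$ gives
\begin{equation*}
\frac{1}{\#\FF_d}\sum_{f\in\FF_d} r(E_f) \;\geq\; \frac{1}{2} - \frac{1}{2}\cdot\frac{\big|\sum_{f\in\FF_d}\Leg{f}{N}\big|}{\#\FF_d} \;\longrightarrow\; \frac{1}{2}.
\end{equation*}

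Finally I would pass from the single-degree averages over $\FF_d$ to the cumulative averages over $\FF_D^* = \bigcup_{d\leq D}\FF_d$ demanded by the statement. Since $\#\FF_d \gg q^d$ grows geometrically, the cumulative count $\#\FF_D^*$ is dominated by its top term $\#\FF_D$, and a standard Cesàro-type argument shows that if the degree-$d$ averages tend to a limit $\geq \tfrac12$ then so does the weighted cumulative average; concretely, for any $\eta>0$ choose $d_0$ so that $\frac{1}{\#\FF_d}\sum_{f\in\FF_d}r(E_f)\geq \tfrac12-\eta$ for $d\geq d_0$, split $\sum_{f\in\FF_D^*}r(E_f)$ into the bounded contribution from $d<d_0$ and the tail, and use $r(E_f)\geq 0$ to discard nothing and bound the tail below, noting the head contributes a vanishing fraction of $\#\FF_D^*$ as $D\to\infty$.

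The only genuinely delicate point is the parity claim in the first paragraph, and it is delicate only in the sense of needing to be stated carefully rather than being hard: one must be sure that the functional equation really does force a zero at $T=1/q$ (not merely a symmetry of the coefficient vector) when $\varepsilon(E_f/K)=-1$. This follows immediately from \eqref{eqn::fe} by substituting $T=1/q$: the identity becomes $L(1/q,E_f/K) = -L(1/q,E_f/K)$. Everything else is bookkeeping: the decomposition of $\FF_d$, the two cited results, and the geometric-growth Cesàro argument for the cumulative average. I expect the Cesàro step to be the most tedious to write out cleanly, but it presents no conceptual obstacle.
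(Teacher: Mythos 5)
Your proposal is correct and is exactly the argument the paper intends: the paper leaves the corollary's proof implicit (``This proposition then leads to the desired corollary''), relying on precisely the chain you spell out --- the functional equation forces $r(E_f)\geq 1$ when $\varepsilon(E_f/K)=-1$, Corollary~\ref{cor::sign-jac} plus Kowalski's proposition show the signs average to $0$, and one passes from the per-degree averages to the cumulative averages over $\FF_D^*$. Your explicit treatment of the parity step and the geometric-growth Ces\`aro step simply fills in details the paper omits, so there is no substantive difference in approach.
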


\subsection{Average Analytic Rank Data}

We define
$$ \mu(E,D) = \frac{\sum_{f\in \FF_D^*} r(E_f)}{\#\FF_D^*} $$
to be the average rank of the family of quadratic twists of $E$ up to degree $D$. This value was calculated for the four elliptic curves discussed above with increasing $D$, and the data is presented in tables \ref{table::rkavg-0}--\ref{table::rkavg-3} below, where the dependence of the average rank on $D$ is made explicit. As in the case of Table \ref{table::fd-size}, an empty entry denotes that those computations were not done. 

\begin{table}[h]
 \renewcommand{\arraystretch}{1.25}
 \begin{minipage}{0.5\textwidth}
  \begin{center}\footnotesize{
   \begin{tabular}{c@{\hspace{4pt}}c@{\hspace{4pt}}c@{\hspace{4pt}}c@{\hspace{4pt}}c@{\hspace{4pt}}c@{\hspace{4pt}}c@{\hspace{4pt}}c@{\hspace{4pt}}c@{\hspace{4pt}}c}\hline
$D$ & 5 & 7 & 11 & 13 & 17 & 19 & 23 & 29 \\ \noalign{\hrule height 2pt}
$1$ & 1.000 & 0.400 & 0.667 & 0.636 & 0.733 & 0.588 & 0.571 & 0.704 \\
$2$ & 0.688 & 0.667 & 0.680 & 0.674 & 0.668 & 0.679 & 0.661 & 0.652 \\
$3$ & 0.644 & 0.669 & 0.622 & 0.629 & 0.610 & 0.607 & 0.588 & 0.576 \\
$4$ & 0.653 & 0.659 & 0.638 & 0.620 & 0.605 & 0.599 & 0.588 & 0.575 \\
$5$ & 0.666 & 0.633 & 0.590 & 0.581 & & & & \\
$6$ & 0.628 & 0.609 & & & & & & \\
$7$ & 0.623 & & & & & & & \\
$8$ & 0.592 & & & & & & & \\
$9$ & 0.582 & & & & & & & \\ \hline
   \end{tabular}}
   \caption{$\mu(X_{222},D)$ for $q\in\QQ$}
   \label{table::rkavg-0}
  \end{center}
 \end{minipage}
 \begin{minipage}{0.5\textwidth}
  \begin{center}\footnotesize{
  \begin{tabular}{c@{\hspace{4pt}}c@{\hspace{4pt}}c@{\hspace{4pt}}c@{\hspace{4pt}}c@{\hspace{4pt}}c@{\hspace{4pt}}c@{\hspace{4pt}}c@{\hspace{4pt}}c@{\hspace{4pt}}c}\hline
$D$ & 5 & 7 & 11 & 13 & 17 & 19 & 23 & 29 \\ \noalign{\hrule height 2pt}
1 & 0.333 & 0.400 & 0.444 & 0.636 & 0.467 & 0.588 & 0.571 & 0.481 \\
2 & 0.688 & 0.556 & 0.540 & 0.549 & 0.527 & 0.568 & 0.562 & 0.545 \\
3 & 0.598 & 0.601 & 0.533 & 0.579 & 0.536 & 0.562 & 0.526 & 0.524 \\
4 & 0.662 & 0.562 & 0.543 & 0.534 & 0.529 & 0.532 & 0.525 & 0.521 \\
5 & 0.586 & 0.565 & 0.525 & 0.538 & & & & \\
6 & 0.634 & 0.539 & & & & & & \\ 
7 & 0.554 & & & & & & & \\ 
8 & 0.581 & & & & & & & \\
9 & 0.535 & & & & & & & \\ \hline
  \end{tabular}}
  \caption{$\mu(X_{211},D)$ for $q\in\QQ$}
  \label{table::rkavg-1}
  \end{center}
 \end{minipage}
\end{table}

\begin{table}[h]
 \renewcommand{\arraystretch}{1.25}
 \begin{minipage}{0.5\textwidth}
  \begin{center}\footnotesize{
   \begin{tabular}{c@{\hspace{4pt}}c@{\hspace{4pt}}c@{\hspace{4pt}}c@{\hspace{4pt}}c@{\hspace{4pt}}c@{\hspace{4pt}}c@{\hspace{4pt}}c@{\hspace{4pt}}c@{\hspace{4pt}}c}\hline
$D$ & 5 & 7 & 11 & 13 & 17 & 19 & 23 & 29 \\ \noalign{\hrule height 2pt}
1 & 0.333 & 0.400 & 0.444 & 0.636 & 0.600 & 0.471 & 0.571 & 0.481 \\ 
2 & 0.562 & 0.556 & 0.540 & 0.618 & 0.590 & 0.580 & 0.587 & 0.585 \\
3 & 0.690 & 0.570 & 0.577 & 0.605 & 0.583 & 0.565 & 0.552 & 0.558 \\
4 & 0.625 & 0.609 & 0.569 & 0.574 & 0.559 & 0.555 & 0.551 & 0.543 \\
5 & 0.618 & 0.571 & 0.553 & 0.554 & & & & \\
6 & 0.602 & 0.569 & & & & & & \\
7 & 0.587 & & & & & & & \\
8 & 0.568 & & & & & & & \\ 
9 & 0.556 & & & & & & & \\ \hline
   \end{tabular}}
   \caption{$\mu(X_{321},D)$ for $q\in\QQ$}
   \label{tabel::rkavg-2}
  \end{center}
 \end{minipage}
 \begin{minipage}{0.5\textwidth}
  \begin{center}\footnotesize{
  \begin{tabular}{c@{\hspace{4pt}}c@{\hspace{4pt}}c@{\hspace{4pt}}c@{\hspace{4pt}}c@{\hspace{4pt}}c@{\hspace{4pt}}c@{\hspace{4pt}}c@{\hspace{4pt}}c@{\hspace{4pt}}c}\hline
$D$ & 5 & 7 & 11 & 13 & 17 & 19 & 23 & 29 \\ \noalign{\hrule height 2pt}
1 & 0.333 & 0.800 & 0.444 & 0.636 & 0.467 & 0.588 & 0.571 & 0.556 \\ 
2 & 0.562 & 0.611 & 0.640 & 0.618 & 0.613 & 0.611 & 0.616 & 0.614 \\
3 & 0.621 & 0.646 & 0.602 & 0.632 & 0.580 & 0.594 & 0.567 & 0.557 \\
4 & 0.616 & 0.617 & 0.593 & 0.590 & 0.576 & 0.575 & 0.562 & 0.553 \\
5 & 0.592 & 0.600 & 0.558 & 0.555 & & & & \\
6 & 0.601 & 0.574 & & & & & & \\
7 & 0.575 & & & & & & & \\
8 & 0.568 & & & & & & & \\ 
9 & 0.548 & & & & & & & \\ \hline
   \end{tabular}}
   \caption{$\mu(X_{431},D)$ for $q\in\QQ$}
   \label{table::rkavg-3}
  \end{center}
 \end{minipage}
\end{table}

Considering each table separately, the individual columns present the data pertaining to Goldfeld's conjecture. In particular, for the largest data sets with $q=5$, there is a slow convergence to the conjecture value of $1/2$. On the other hand, each row of a table presents data relevant to Katz-Sarnak \cite{KS}, where one lets $q$ grow to infinity to determine that the conjugacy classes of the Frobenius automorphism are equidistributed in the special orthogonal group of $N\times N$ matrices with respect to Haar measure, where $N$ is the degree of the $L$-function.

We can also consider how the average ranks varies between each of the four curves for a fixed $q$, as presented in figures \ref{fig::avgrk-5} through \ref{fig::avgrk-29} below. Recall that the four cures are not isogenous but have nearly the same reduction types. Even for the smallest data sets ($q\in\{17,19,23,29\}$), there is good numerical evidence that the average ranks for each of the four curves are converging to the same value for any given $q$. Again, $q=5$ provides the strongest evidence that this value is $1/2$. Note also that in general the average rank of the Legendre curve $X_{222}$ dominates the rank of the other three curves. 

\begin{figure}[h]
\begin{center}
  \psfrag{X222}{$X_{222}$}\psfrag{X211}{$X_{211}$}\psfrag{X321}{$X_{321}$}\psfrag{X431}{$X_{431}$}\psfrag{mu}{$\mu$}\psfrag{D}{$D$}
  \includegraphics[trim = 0pt 58pt 0pt 10pt, clip, angle=-90, scale=.6]{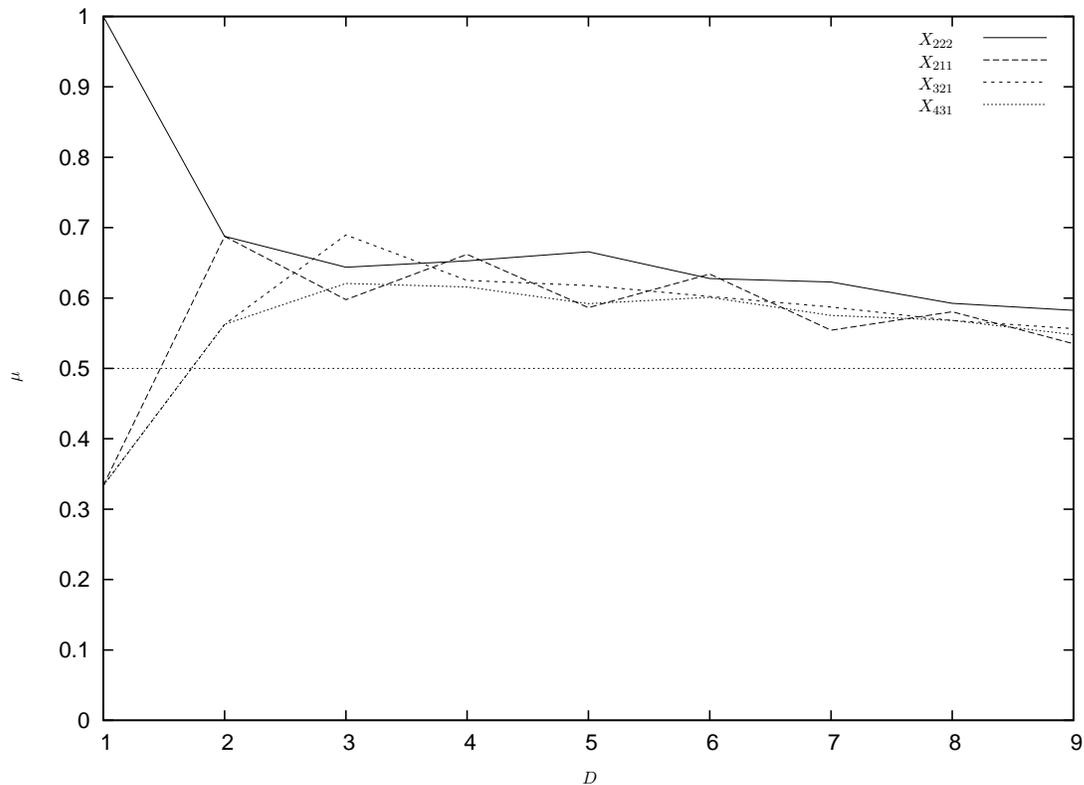}
\vspace{-.3cm} 
\caption{Variation of $\mu(X_i,D)$ as $i$ varies for $q=5$}
\label{fig::avgrk-5}
\vspace{-.4cm} 
\end{center}
\end{figure}

\clearpage

\begin{figure}[t]
\begin{center}
  \psfrag{X222}{$X_{222}$}\psfrag{X211}{$X_{211}$}\psfrag{X321}{$X_{321}$}\psfrag{X431}{$X_{431}$}\psfrag{mu}{$\mu$}\psfrag{D}{$D$}
  \includegraphics[trim = 0pt 58pt 0pt 10pt, clip, angle=-90, scale=.6]{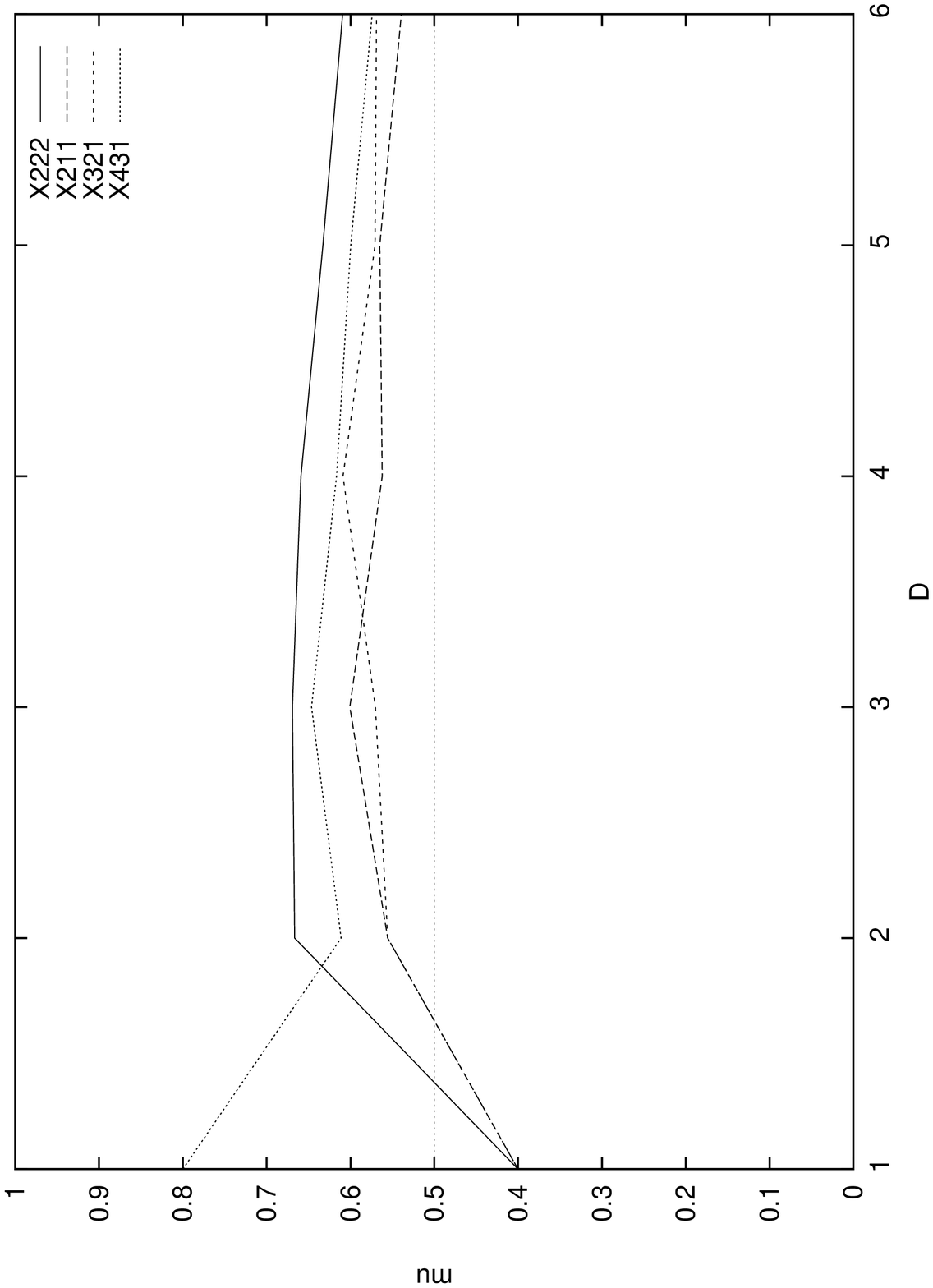}
\vspace{-.3cm} 
\caption{Variation of $\mu(X_i,D)$ as $i$ varies for $q=7$}
\label{fig::avgrk-7}
\vspace{-.7cm} 
\end{center}
\end{figure}

\begin{figure}[t]
\begin{center}
  \psfrag{X222}{$X_{222}$}\psfrag{X211}{$X_{211}$}\psfrag{X321}{$X_{321}$}\psfrag{X431}{$X_{431}$}\psfrag{mu}{$\mu$}\psfrag{D}{$D$}
  \includegraphics[trim = 0pt 58pt 0pt 10pt, clip, angle=-90, scale=.6]{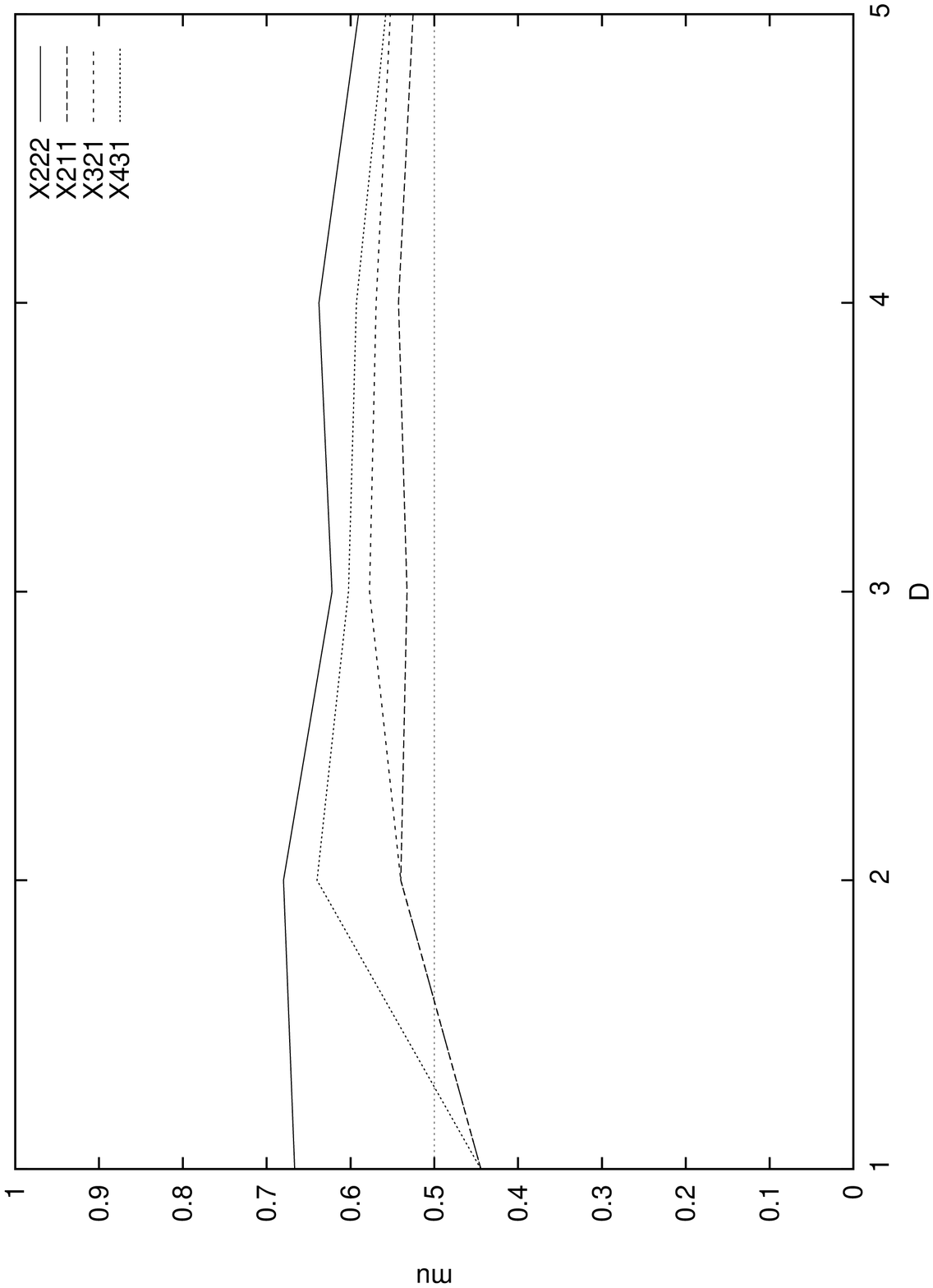}
\vspace{-.3cm} 
\caption{Variation of $\mu(X_i,D)$ as $i$ varies for $q=11$}
\label{fig::avgrk-11}
\vspace{-.4cm} 
\end{center}
\end{figure}

\clearpage

\begin{figure}[b]
\begin{center}
  \psfrag{X222}{$X_{222}$}\psfrag{X211}{$X_{211}$}\psfrag{X321}{$X_{321}$}\psfrag{X431}{$X_{431}$}\psfrag{mu}{$\mu$}\psfrag{D}{$D$}
  \includegraphics[trim = 0pt 58pt 0pt 10pt, clip, angle=-90, scale=.6]{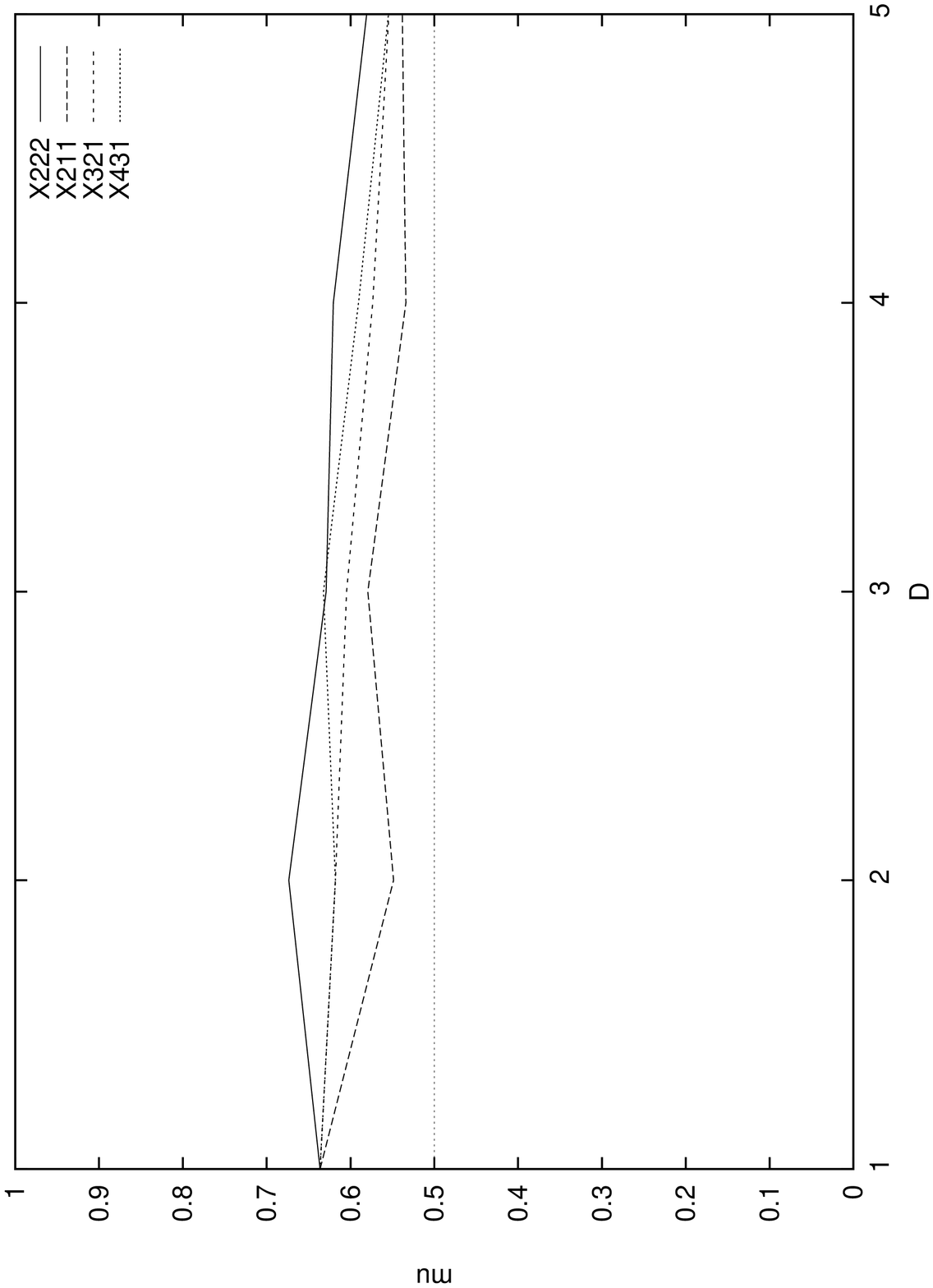}\vspace{-.4cm} 
\caption{Variation of $\mu(X_i,D)$ as $i$ varies for $q=13$}
\vspace{-.7cm} 
\label{fig::avgrk-13}
\end{center}
\end{figure}

\begin{figure}[t]
\begin{center}
  \psfrag{X222}{$X_{222}$}\psfrag{X211}{$X_{211}$}\psfrag{X321}{$X_{321}$}\psfrag{X431}{$X_{431}$}\psfrag{mu}{$\mu$}\psfrag{D}{$D$}
  \includegraphics[trim = 0pt 58pt 0pt 10pt, clip, angle=-90, scale=.6]{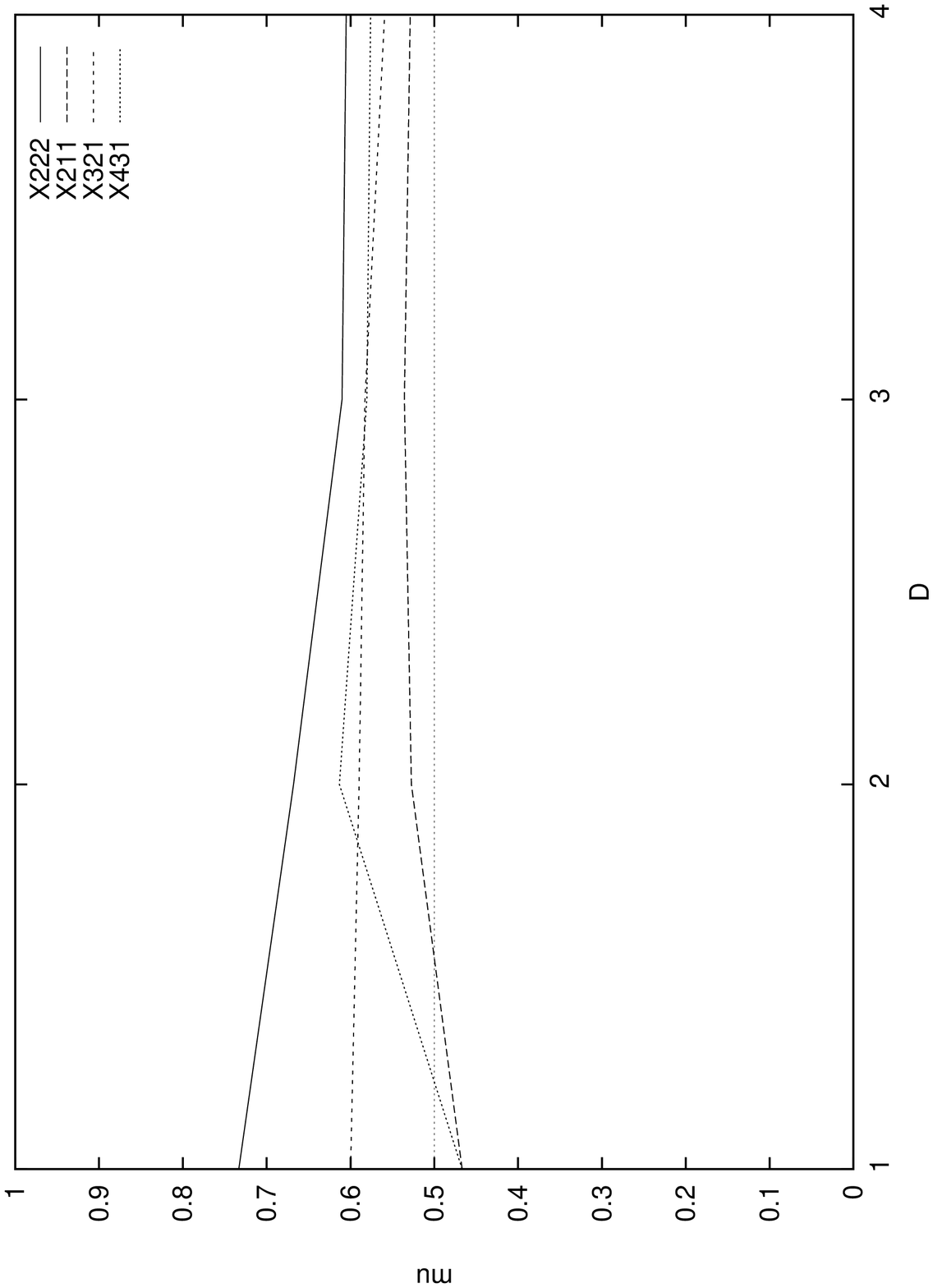}
\vspace{-.3cm} 
\caption{Variation of $\mu(X_i,D)$ as $i$ varies for $q=17$}
\label{fig::avgrk-17}
\vspace{-.4cm} 
\end{center}
\end{figure}

\clearpage

\begin{figure}[b]
\begin{center}
  \psfrag{X222}{$X_{222}$}\psfrag{X211}{$X_{211}$}\psfrag{X321}{$X_{321}$}\psfrag{X431}{$X_{431}$}\psfrag{mu}{$\mu$}\psfrag{D}{$D$}
  \includegraphics[trim = 0pt 58pt 0pt 10pt, clip, angle=-90, scale=.6]{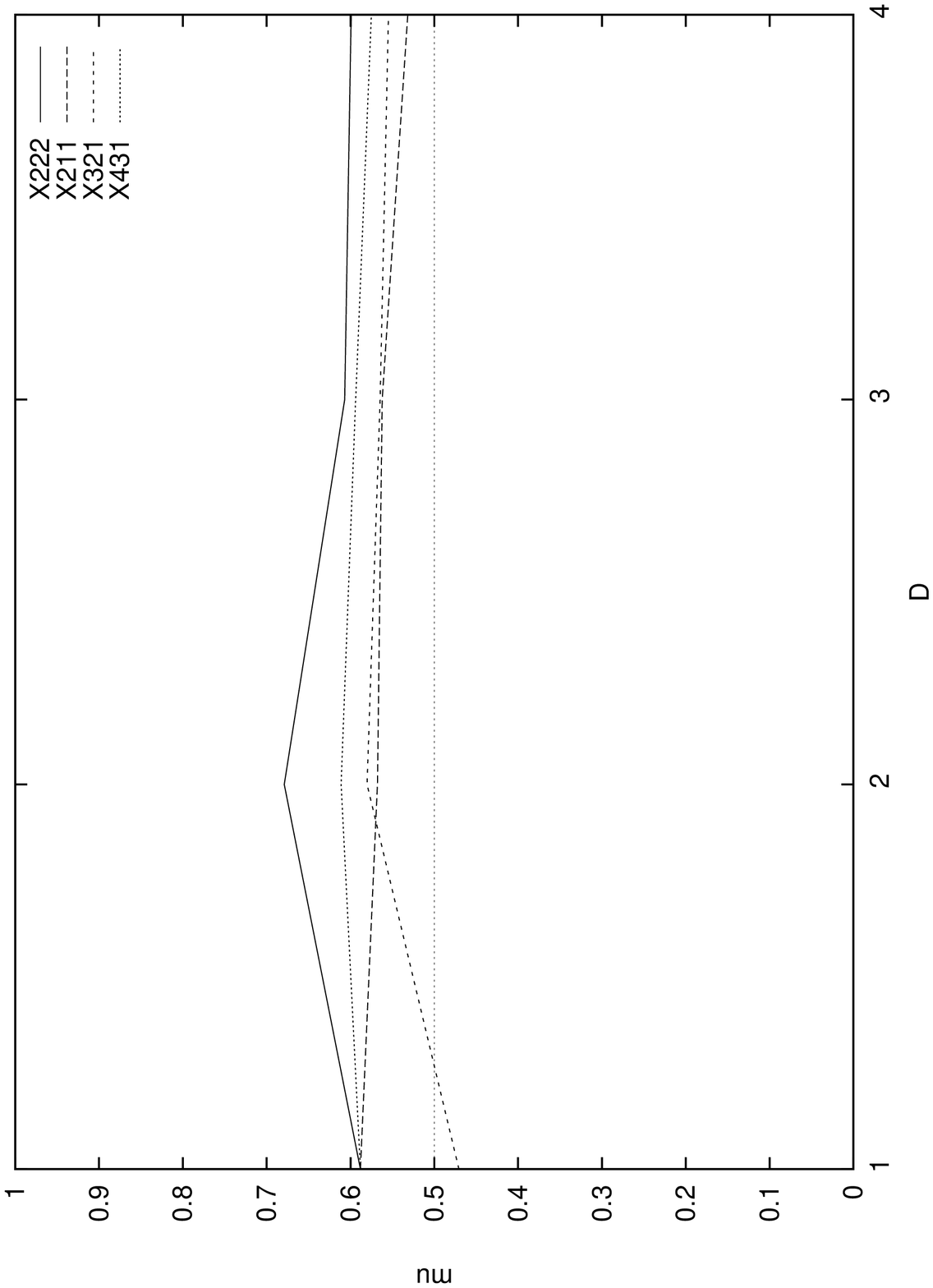}
\vspace{-.3cm} 
\caption{Variation of $\mu(X_i,D)$ as $i$ varies for $q=19$}
\label{fig::avgrk-19}
\vspace{-.7cm} 
\end{center}
\end{figure}

\begin{figure}[t]
\begin{center}
  \psfrag{X222}{$X_{222}$}\psfrag{X211}{$X_{211}$}\psfrag{X321}{$X_{321}$}\psfrag{X431}{$X_{431}$}\psfrag{mu}{$\mu$}\psfrag{D}{$D$}
  \includegraphics[trim = 0pt 58pt 0pt 10pt, clip, angle=-90, scale=.6]{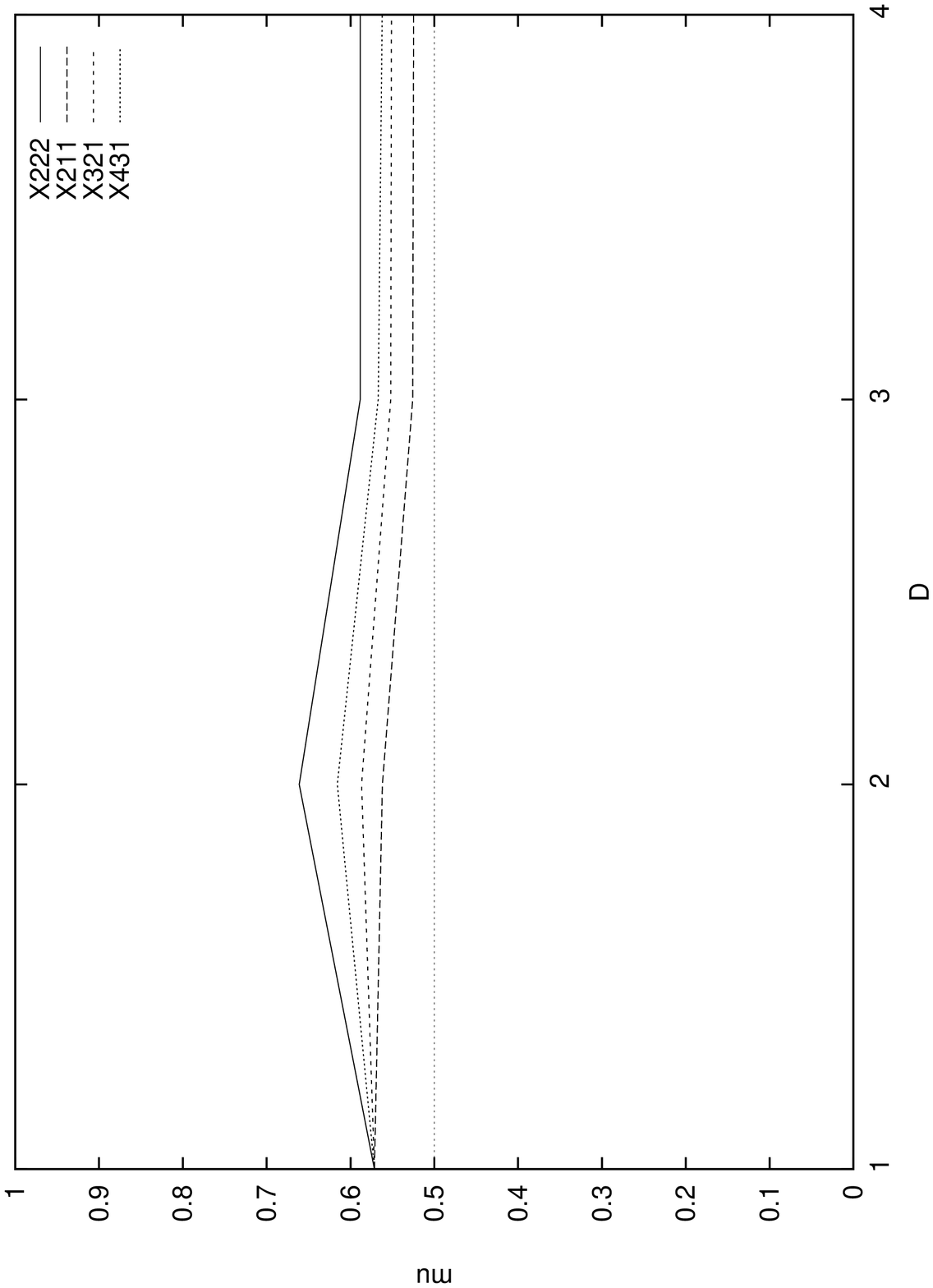}
\vspace{-.3cm} 
\caption{Variation of $\mu(X_i,D)$ as $i$ varies for $q=23$}
\label{fig::avgrk-23}
\vspace{-.4cm} 
\end{center}
\end{figure}

\clearpage

\begin{figure}[t]
\begin{center}
  \psfrag{X222}{$X_{222}$}\psfrag{X211}{$X_{211}$}\psfrag{X321}{$X_{321}$}\psfrag{X431}{$X_{431}$}\psfrag{mu}{$\mu$}\psfrag{D}{$D$}
  \includegraphics[trim = 0pt 58pt 0pt 10pt, clip, angle=-90, scale=.6]{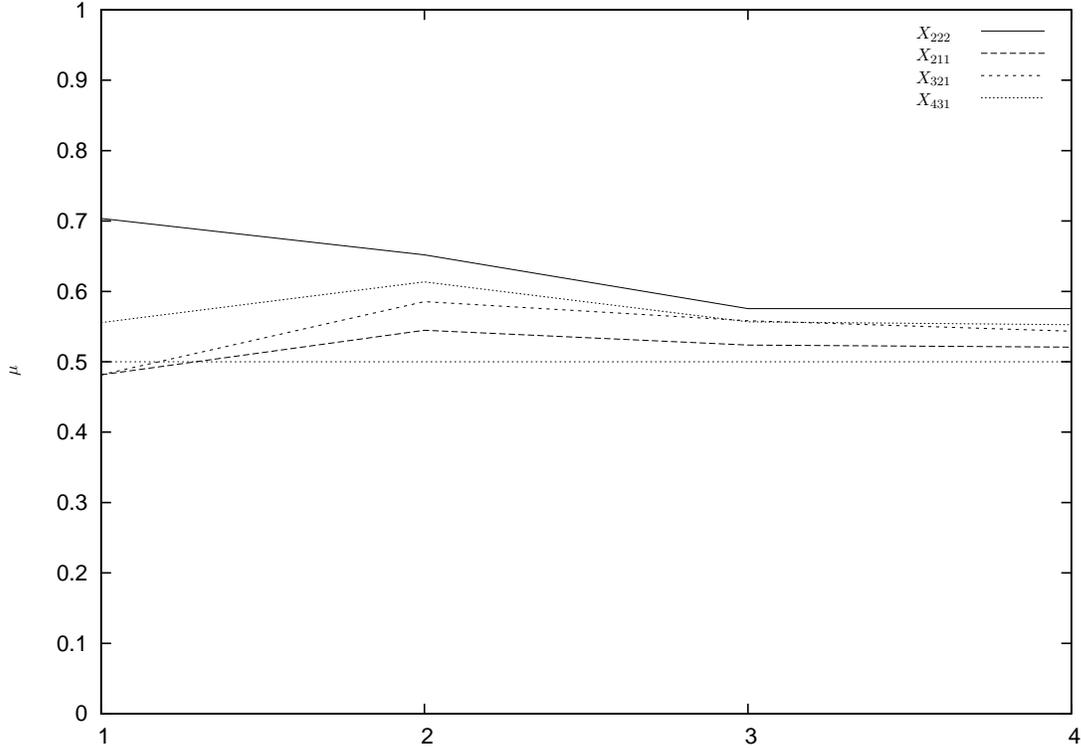}\vspace{-.4cm} 
\caption{Variation of $\mu(X_i,D)$ as $i$ varies for $q=29$}
\vspace{-.7cm} 
\label{fig::avgrk-29}
\end{center}
\end{figure}

\subsection{Rank Distributions}

Combining Goldfeld's conjecture with the parity conjecture leads to a conjecture on the density of ranks in a family of quadratic twists (for details of the formulation, see section 7.6 in \cite{RS}):
\begin{conjecture}
With notation as in Conjecture \ref{conj::goldfeld-ff},
\begin{equation*}
\lim_{D\rightarrow\infty} \frac{\#\{f\in\FF_D^*: r(E_f)=0\}}{\#\FF_D^*} = \lim_{D\rightarrow\infty} \frac{\#\{f\in\FF_D^*: r(E_f)=1\}}{\#\FF_D^*} = \frac{1}{2},
\end{equation*}
whereas
\begin{equation*}
 \lim_{D\rightarrow\infty} \frac{\#\{f\in\FF_D^*: r(E_f)\geq 2\}}{\#\FF_D^*} = 0.
\end{equation*}
\end{conjecture}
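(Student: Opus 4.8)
\emph{Proof proposal.} The density statement is conjectural precisely because it rests on the function-field Goldfeld conjecture (Conjecture~\ref{conj::goldfeld-ff}); the sentence preceding it indicates that, together with the parity conjecture and the vanishing of the average sign of the functional equation established above, it should follow formally. The plan is to make this deduction explicit and to identify where the real difficulty sits. Throughout write $p_k(D)=\#\{f\in\FF_D^*:r(E_f)=k\}/\#\FF_D^*$, so that $\sum_{k\ge0}p_k(D)=1$; the target is $p_0(D)\to\tfrac12$, $p_1(D)\to\tfrac12$, and $\sum_{k\ge2}p_k(D)\to0$.

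First I would assemble the two analytic inputs. By the parity conjecture $(-1)^{r(E_f)}=\vareps(E_f/K)$, hence $\sum_{k\text{ even}}p_k(D)=\tfrac12\bigl(1+\tfrac{1}{\#\FF_D^*}\sum_{f\in\FF_D^*}\vareps(E_f/K)\bigr)$. By Corollary~\ref{cor::sign-jac} together with the bound $|A_d|\ll q^{\deg(N)+d/2}$ obtained inside the proof of Kowalski's Proposition, one gets $\bigl|\sum_{f\in\FF_d}\vareps(E_f/K)\bigr|\ll q^{\deg(N)+d/2}$; summing over $d\le D$ and using $\#\FF_D^*\gg q^D$ yields $\tfrac{1}{\#\FF_D^*}\sum_{f\in\FF_D^*}\vareps(E_f/K)\ll q^{\deg(N)-D/2}\to0$. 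Thus $\sum_{k\text{ even}}p_k(D)\to\tfrac12$ and, complementarily, $\sum_{k\text{ odd}}p_k(D)\to\tfrac12$. The second input is Conjecture~\ref{conj::goldfeld-ff} itself: $\sum_{k\ge1}k\,p_k(D)\to\tfrac12$.

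Next I would run an elementary squeeze on the $p_k(D)$. The parity input gives $p_0(D)\le\sum_{k\text{ even}}p_k(D)$ and $p_1(D)\le\sum_{k\text{ odd}}p_k(D)$, so $\limsup_D p_0(D)\le\tfrac12$, $\limsup_D p_1(D)\le\tfrac12$, and hence $\limsup_D\bigl(2p_0(D)+p_1(D)\bigr)\le\tfrac32$. On the other hand, since every term in $\sum_{k\ge2}k\,p_k(D)$ has $k\ge2$, one has $\sum_{k\ge1}k\,p_k(D)\ge p_1(D)+2\sum_{k\ge2}p_k(D)=2-2p_0(D)-p_1(D)$, so the Goldfeld input forces $\liminf_D\bigl(2p_0(D)+p_1(D)\bigr)\ge\tfrac32$. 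Therefore $2p_0(D)+p_1(D)\to\tfrac32$; combined with $\limsup_D p_1(D)\le\tfrac12$ this forces $\liminf_D p_0(D)\ge\tfrac12$, hence $p_0(D)\to\tfrac12$, then $p_1(D)\to\tfrac12$, and finally $\sum_{k\ge2}p_k(D)=1-p_0(D)-p_1(D)\to0$, which is the claim.

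The main obstacle is not any of the above, which is pure bookkeeping, but that Conjecture~\ref{conj::goldfeld-ff} is itself open: the only unconditional input we have is the earlier Corollary that $\lim_{D}\tfrac{1}{\#\FF_D^*}\sum_{f\in\FF_D^*}r(E_f)\ge\tfrac12$, and this one-sided bound is consistent with pathologies such as every twist having rank exactly $1$, or a positive density of twists of rank $\ge2$. To make the density statement unconditional one would instead need an independent \emph{upper} bound on the number of twists of rank $\ge2$ --- for instance an upper bound for $\sum_{f\in\FF_D^*}r(E_f)$ coming from the explicit formula for the zeros of $L(T,E_f/K)$ and a large sieve for the associated character sums, in the spirit of the analytic bounds on excess rank in families of quadratic twists over number fields. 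Supplying that analytic ingredient, rather than the squeeze above, is where I expect the genuine difficulty to lie, and the present approach does not address it.
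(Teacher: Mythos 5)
This statement is a conjecture, and the paper gives no proof of it --- it only remarks that it arises by combining Conjecture~\ref{conj::goldfeld-ff} with the parity conjecture (pointing to \cite{RS} for the formulation) --- so your conditional derivation is exactly the deduction the paper alludes to: the squeeze on the $p_k(D)$ using the average-sign result (Corollary~\ref{cor::sign-jac} plus the bound on $A_d$) together with Conjecture~\ref{conj::goldfeld-ff} is correct, and you rightly identify that the genuinely open ingredient is Conjecture~\ref{conj::goldfeld-ff} itself (equivalently, an upper bound on the twists of rank $\geq 2$). One small simplification: since $r(E_f)$ here is the analytic rank, the parity input is unconditional, because the functional equation (\ref{eqn5}) already forces $(-1)^{r(E_f)}=\varepsilon(E_f/K)$, so no parity conjecture is needed in the function-field formulation.
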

As the data on average rank above suggests, the distribution of analytic ranks for our four families is close to that predicted by the density conjecture with a non-trivial number of twists with rank greater than or equal to two.\footnote{The largest rank discovered was a rank 5 curve, a twist of $X_{222}/\F_5(t)$ by $f=t^7 +2t^6+ t^5 +4t^4+ 4t^3+ t^2+ 2t+1$.} 
We present the relevant data in the table below, where we have removed the dependence of the distribution on the degree of the twisting polynomials and instead consider all the $L$-functions we were able to compute given some $q$. For the dependence of the rank distribution on the degree, see the tables in the appendix.

\begin{table}[t]
\renewcommand{\arraystretch}{1.25}
\begin{center}
\begin{tabular}{c|c@{\hspace{3pt}}c@{\hspace{3pt}}c@{\hspace{3pt}}c|c@{\hspace{3pt}}c@{\hspace{3pt}}c@{\hspace{3pt}}c|c@{\hspace{3pt}}c@{\hspace{3pt}}c@{\hspace{3pt}}c|c@{\hspace{3pt}}c@{\hspace{3pt}}c@{\hspace{3pt}}c}\hline
$q$  & 5  &   &  &   &  
      7 &   & &   &  
     11 &  & &   & 
     13 &  & &   \\  
Rank & 0 & 1 & 2 & $\geq 3$ & 0 & 1 & 2 & $\geq 3$ & 0 & 1 & 2 & $\geq 3$ & 0 & 1 & 2 & $\geq 3$  \\ \noalign{\hrule height 2pt}
$X_{222}$ & .461 & .498 & .039 & .002 
          & .447 & .499 & .053 & .002 
          & .457 & .498 & .043 & .002 
          & .462 & .498 & .038 & .002 \\
$X_{211}$ & .483 & .500 & .018 & .000
          & .481 & .500 & .019 & .001 
          & .488 & .500 & .012 & .000
          & .481 & .500 & .019 & .000 \\
$X_{321}$ & .473 & .499 & .027 & .001 
          & .468 & .497 & .031 & .003 
          & .474 & .500 & .026 & .000
          & .474 & .499 & .026 & .000 \\
$X_{431}$ & .477 & .499 & .023 & .001
          & .464 & .499 & .036 & .001
          & .471 & .500 & .029 & .000
          & .474 & .499 & .026 & .001 \\ \hline
\end{tabular}

\vspace{.5cm}

\begin{tabular}{c|c@{\hspace{3pt}}c@{\hspace{3pt}}c@{\hspace{3pt}}c|c@{\hspace{3pt}}c@{\hspace{3pt}}c@{\hspace{3pt}}c|c@{\hspace{3pt}}c@{\hspace{3pt}}c@{\hspace{3pt}}c|c@{\hspace{3pt}}c@{\hspace{3pt}}c@{\hspace{3pt}}c}\hline
$q$  & 17  &   & &   &  
     19 &   & &   &  
     23 &   & &   & 
     29 &   & &   \\  
Rank & 0 & 1 & 2 & $\geq 3$ & 0 & 1 & 2 & $\geq 3$ & 0 & 1 & 2 & $\geq 3$ & 0 & 1 & 2 & $\geq 3$  \\ \noalign{\hrule height 2pt}
$X_{222}$ & .450 & .498 & .059 & .002 
          & .452 & .498 & .048 & .002 
          & .458 & .498 & .042 & .002
          & .463 & .499 & .036 & .001 \\
$X_{211}$ & .485 & .500 & .014 & .000
          & .484 & .500 & .016 & .000 
          & .488 & .500 & .012 & .000
          & .490 & .500 & .010 & .000 \\
$X_{321}$ & .471 & .500 & .029 & .000
          & .473 & .500 & .027 & .001
          & .475 & .500 & .025 & .000
          & .479 & .500 & .021 & .000 \\
$X_{431}$ & .463 & .499 & .037 & .000
          & .463 & .499 & .036 & .001
          & .470 & .499 & .030 & .001
          & .474 & .499 & .026 & .001 \\ \hline
\end{tabular}\caption{Rank Distributions for All Curves over All $d$ and $q\in\QQ$}
\end{center}
\end{table}
\renewcommand{\arraystretch}{1.0}


\section{Conclusion}

The remarkable property that the $L$-function of a non-isotrivial elliptic curve over a function field is a polynomial yields an effective algorithm to determine its coefficients by computing the number of points on a finite number of fibers. These fibers precisely correspond to the Euler factors that determine $L(E/K,T)$, and by realizing a given curve as a quadratic twist or pullback of another curve, the number of Euler factors that need to be computed can be minimized. In particular, the versal elliptic curve provides a (non-canonical) choice for an elliptic curve from which one can pullback and twist to recover any given elliptic curve, allowing for the efficient computation of the given curve's $L$-function provided sufficiently many Euler factors have been precomputed. These algorithms have been developed into \ellff, a software library for the open-source mathematical software system Sage, allowing anyone to quickly compute such $L$-functions. 

Experimentally, we computed the $L$-functions of four different families of quadratic twists in order to examine their analytic ranks for numerical evidence pertaining to Goldfeld's conjecture. Using an elementary argument, we know the asymptotic average rank over our family of quadratic twists is at least $1/2$ as the degree of the twisting polynomial becomes arbitrarily large. Unlike the situation in number fields, the case of function fields provides strong evidence, especially for the largest data sets, that this average is indeed $1/2$, thus supporting the validity of Goldfeld's conjecture in the function field case. Moreover, the experimental data also suggests that the analytic ranks are distributed closely with the density conjecture's prediction. Nonetheless the presence of a non-trivial amount of curves of rank at least 2 in even the largest data sets may suggest the convergence to this distribution is rather slow.

This work is part of a small but growing body of computational number theory directly focused on function fields. Historically, computational number theorists have primarily worked over number fields, in particular $\Q$. This (understandable) bias has produced a dearth of algorithms and data for the function field setting, despite the fact that many of the ideas from number fields can be formulated more generally for any global field. There is much work left to be done -- both theoretical and computational -- for the case of function fields, but we believe the example of $L$-functions of elliptic curves indicates that the effort is worthwhile and yields interesting mathematics.

\newpage

\section*{Appendix: Analytic Rank Distribution Tables}
The following tables give the distribution of analytic ranks, making explicit their dependence on the degree $d$ of twists considered in $\FF_d$.
\begin{table}[!h]
\renewcommand{\arraystretch}{1.25}
\begin{center}\tiny{
\begin{tabular}{cccccccc}\hline
& Rank & 0 & 1 & 2 & 3 & 4 & 5 \\ \noalign{\hrule height 2pt}
$q=5$ & $\FF_1$ & 0.333333 & 0.333333 & 0.333333 \\
& $\FF_2$ & 0.384615 & 0.615385 \\
& $\FF_3$ & 0.422535 & 0.521127 & 0.056338 \\
& $\FF_4$ & 0.417391 & 0.510145 & 0.072464 \\
& $\FF_5$ & 0.420357 & 0.496262 & 0.077631 & 0.005750 \\
& $\FF_6$ & 0.444163 & 0.497292 & 0.054973 & 0.003342 & 0.000230 \\
& $\FF_7$ & 0.442509 & 0.497017 & 0.057157 & 0.003087 & 0.000207 & 0.000023 \\
& $\FF_8$ & 0.459645 & 0.498025 & 0.040256 & 0.002009 & 0.000065  \\ 
& $\FF_9$ & 0.461748 & 0.498378 & 0.038150 & 0.001628 & 0.000095  \\ \cline{2-8} 
& All & 0.460615 & 0.498274 & 0.039264 & 0.001751 & 0.000094 & 0.000001 \\ \noalign{\hrule height 2pt}
$q=7$ & $\FF_1$ & 0.600000 & 0.400000 \\
& $\FF_2$ & 0.354839 & 0.580645 & 0.064516 \\
& $\FF_3$ & 0.418502 & 0.502203 & 0.070485 & 0.008811 \\
& $\FF_4$ & 0.419580 & 0.503497 & 0.076923 \\
& $\FF_5$ & 0.438684 & 0.497326 & 0.060818 & 0.003172 \\
& $\FF_6$ & 0.448610 & 0.498685 & 0.051280 & 0.001425 \\ \cline{2-8}  
& All & 0.446787 & 0.498634 & 0.052947 & 0.001632 \\ \noalign{\hrule height 2pt}
$q=11$ & $\FF_1$ & 0.444444 & 0.444444 & 0.111111 \\
& $\FF_2$ & 0.384615 & 0.549451 & 0.065934 \\
& $\FF_3$ & 0.442591 & 0.501472 & 0.052993 & 0.002944 \\
& $\FF_4$ & 0.433056 & 0.497809 & 0.066005 & 0.003041 & 0.000089 \\
& $\FF_5$ & 0.459054 & 0.498232 & 0.040828 & 0.001845 & 0.000041 \\ \cline{2-8} 
& All & 0.456731 & 0.498252 & 0.043021 & 0.001951 & 0.000044 \\ \noalign{\hrule height 2pt}
$q=13$ & $\FF_1$ & 0.454545 & 0.454545 & 0.090909 \\
& $\FF_2$ & 0.398496 & 0.533835 & 0.060150 & 0.007519 \\
& $\FF_3$ & 0.439749 & 0.499143 & 0.057110 & 0.003998 \\
& $\FF_4$ & 0.442430 & 0.497998 & 0.057064 & 0.002508 \\
& $\FF_5$ & 0.463263 & 0.498073 & 0.036630 & 0.001966 & 0.000068 \\ \cline{2-8} 
& All & 0.461629 & 0.498087 & 0.038204 & 0.002018 & 0.000062 \\ \noalign{\hrule height 2pt}
$q=17$ & $\FF_1$ & 0.400000 & 0.466667 & 0.133333 \\
& $\FF_2$ & 0.406639 & 0.526971 & 0.062241 & 0.004149 \\
& $\FF_3$ & 0.452144 & 0.495517 & 0.046038 & 0.006300 \\
& $\FF_4$ & 0.449574 & 0.498210 & 0.050162 & 0.002011 & 0.000043 \\ \cline{2-8} 
& All & 0.449568 & 0.498148 & 0.049989 & 0.002255 & 0.000040 \\ \noalign{\hrule height 2pt}
$q=19$ & $\FF_1$ & 0.470588 & 0.470588 & 0.058824 \\
& $\FF_2$ & 0.400651 & 0.521173 & 0.071661 & 0.006515 \\
& $\FF_3$ & 0.448440 & 0.500767 & 0.050111 & 0.000682 \\
& $\FF_4$ & 0.452392 & 0.498236 & 0.047451 & 0.001921 \\ \cline{2-8} 
& All & 0.452063 & 0.498419 & 0.047648 & 0.001871 \\ \noalign{\hrule height 2pt}
$q=23$ & $\FF_1$ & 0.476190 & 0.476190 & 0.047619 \\
& $\FF_2$ & 0.406048 & 0.522678 & 0.071274 \\
& $\FF_3$ & 0.458984 & 0.499111 & 0.039940 & 0.001871 & 0.000094 \\
& $\FF_4$ & 0.457616 & 0.498264 & 0.042271 & 0.001824 & 0.000023 \\ \cline{2-8} 
& All &  0.457577 & 0.498353 & 0.042221 & 0.001823 & 0.000027 \\ \noalign{\hrule height 2pt}
$q=29$ & $\FF_1$ & 0.407407 & 0.481481 & 0.111111 \\
& $\FF_2$ & 0.418758 & 0.515192 & 0.063408 & 0.002642 \\
& $\FF_3$ & 0.465170 & 0.498705 & 0.034216 & 0.001908 \\
& $\FF_4$ & 0.463392 & 0.498916 & 0.036535 & 0.001127 & 0.000030\\ \cline{2-8} 
& All & 0.463398 & 0.498927 & 0.036492 & 0.001154 & 0.000029 \\ \noalign{\hrule height 2pt}
\end{tabular}}\caption{Rank Distribution of $X_{222}/\F_q(t)$}\label{rkdist-0}
\end{center}
\end{table}

\begin{table}[htb]
\renewcommand{\arraystretch}{1.25}
\begin{center}\tiny{
\begin{tabular}{ccccccc}\hline
  & Rank & 0 & 1 & 2 & 3 & 4 \\ \noalign{\hrule height 2pt}
$q=5$ & $\FF_1$ & 0.666667 & 0.333333 \\
& $\FF_2$ & 0.307692 & 0.615385 & 0.076923 \\
& $\FF_3$ & 0.450704 & 0.521127 & 0.028169 \\
& $\FF_4$ & 0.405797 & 0.510145 & 0.084058 \\
& $\FF_5$ & 0.465210 & 0.502013 & 0.032777 \\
& $\FF_6$ & 0.427337 & 0.499942 & 0.072030 & 0.000691 \\
& $\FF_7$ & 0.483125 & 0.499758 & 0.016748 & 0.000369 \\
& $\FF_8$ & 0.456834 & 0.499611 & 0.043132 & 0.000424 \\
& $\FF_9$ & 0.488469 & 0.499751 & 0.011520 & 0.000256 & 0.000005\\ \cline{2-7} 
& All &  0.482791 & 0.499737 & 0.017179 & 0.000289 & 0.000004 \\ \noalign{\hrule height 2pt} 
$q=7$ & $\FF_1$ & 0.600000 & 0.400000 \\
& $\FF_2$ & 0.419355 & 0.580645 \\
& $\FF_3$ & 0.440529 & 0.511013 & 0.048458 \\
& $\FF_4$ & 0.471710 & 0.502225 & 0.024793 & 0.001271 \\
& $\FF_5$ & 0.467597 & 0.499864 & 0.031904 & 0.000634 \\
& $\FF_6$ & 0.482935 & 0.499566 & 0.016955 & 0.000544 \\ \cline{2-7} 
& All &  0.480738 & 0.499700 & 0.018996 & 0.000566 \\ \noalign{\hrule height 2pt} 
$q=11$ & $\FF_1$ & 0.555556 & 0.444444 \\
& $\FF_2$ & 0.450549 & 0.549451 \\
& $\FF_3$ & 0.481845 & 0.504416 & 0.013739 \\
& $\FF_4$ & 0.479116 & 0.499419 & 0.020034 & 0.001431 \\
& $\FF_5$ & 0.488324 & 0.499963 & 0.011599 & 0.000114 \\ \cline{2-7} 
& All &  0.487493 & 0.499982 & 0.012303 & 0.000222 \\ \noalign{\hrule height 2pt}
$q=13$ & $\FF_1$ & 0.454545 & 0.454545 & 0.090909 \\
& $\FF_2$ & 0.458647 & 0.541353 \\
& $\FF_3$ & 0.457453 & 0.503141 & 0.039406 \\
& $\FF_4$ & 0.484667 & 0.500506 & 0.014827 \\
& $\FF_5$ & 0.480937 & 0.499809 & 0.019024 & 0.000230 \\ \cline{2-7} 
& All &  0.481063 & 0.499892 & 0.018832 & 0.000212 \\ \noalign{\hrule height 2pt}
$q=17$ & $\FF_1$ & 0.533333 & 0.466667 \\
& $\FF_2$ & 0.468880 & 0.531120 \\
& $\FF_3$ & 0.480979 & 0.501817 & 0.017204 \\
& $\FF_4$ & 0.485716 & 0.500221 & 0.014063 \\ \cline{2-7} 
& All & 0.485409 & 0.500403 & 0.014189 \\ \noalign{\hrule height 2pt}
$q=19$ & $\FF_1$ & 0.470588 & 0.470588 & 0.058824 \\
& $\FF_2$ & 0.452769 & 0.527687 & 0.019544 \\
& $\FF_3$ & 0.468723 & 0.501108 & 0.029828 & 0.000341 \\
& $\FF_4$ & 0.484918 & 0.500157 & 0.014925 \\ \cline{2-7} 
& All & 0.484024 & 0.500272 & 0.015687 & 0.000017 \\ \noalign{\hrule height 2pt}
$q=23$ & $\FF_1$ & 0.476190 & 0.476190 & 0.047619 \\
& $\FF_2$ & 0.457883 & 0.522678 & 0.019438 \\
& $\FF_3$ & 0.487513 & 0.500889 & 0.011505 & 0.000094 \\
& $\FF_4$ & 0.487659 & 0.500089 & 0.012252 \\ \cline{2-7} 
& All &  0.487590 & 0.500171 & 0.012235 & 0.000004 \\ \noalign{\hrule height 2pt}
$q=29$ & $\FF_1$ & 0.518519 & 0.481481 \\
& $\FF_2$ & 0.467635 & 0.517834 & 0.014531 \\
& $\FF_3$ & 0.488345 & 0.500523 & 0.011042 & 0.000091 \\
& $\FF_4$ & 0.489674 & 0.499968 & 0.010283 & 0.000075 \\ \cline{2-7} 
& All & 0.489605 & 0.500006 & 0.010313 & 0.000076 \\ \noalign{\hrule height 2pt}
\end{tabular}}\caption{Rank Distribution of $X_{211}/\F_q(t)$}\label{rkdist-1}
\end{center}
\end{table}

\begin{table}[htb]
\renewcommand{\arraystretch}{1.25}
\begin{center}\tiny{
\begin{tabular}{ccccccc}\hline
& Rank & 0 & 1 & 2 & 3 & 4 \\ \noalign{\hrule height 2pt}
$q=5$ & $\FF_1$ & 0.666667 & 0.333333 \\  
& $\FF_2$ & 0.384615 & 0.615385 \\  
& $\FF_3$ & 0.380282 & 0.521127 & 0.098592 \\  
& $\FF_4$ & 0.446377 & 0.504348 & 0.043478 & 0.005797 \\  
& $\FF_5$ & 0.444508 & 0.498562 & 0.053479 & 0.003450 \\  
& $\FF_6$ & 0.452345 & 0.498790 & 0.047021 & 0.001844 \\  
& $\FF_7$ & 0.460525 & 0.497892 & 0.039302 & 0.002235 & 0.000046 \\  
& $\FF_8$ & 0.469898 & 0.498468 & 0.030059 & 0.001567 & 0.000009 \\  
& $\FF_9$ & 0.474192 & 0.499060 & 0.025765 & 0.000947 & 0.000036 \\ \cline{2-7}   
& All & 0.472877 & 0.498928 & 0.027065 & 0.001098 & 0.000032 \\ \noalign{\hrule height 2pt} 
$q=7$ & $\FF_1$ & 0.600000 & 0.400000 \\   
& $\FF_2$ & 0.419355 & 0.580645 \\   
& $\FF_3$ & 0.458150 & 0.511013 & 0.030837 \\  
& $\FF_4$ & 0.448188 & 0.495868 & 0.048315 & 0.007629 \\  
& $\FF_5$ & 0.468504 & 0.499592 & 0.030998 & 0.000906 \\  
& $\FF_6$ & 0.0.468881 & 0.496872 & 0.030970 & 0.003238 & 0.000039 \\ \cline{2-7}   
& All & 0.468436 & 0.497247 & 0.031264 & 0.003020 & 0.000033 \\ \noalign{\hrule height 2pt} 
$q=11$ & $\FF_1$ & 0.555556 & 0.444444 \\   
& $\FF_2$ & 0.450549 & 0.549451 \\   
& $\FF_3$ & 0.457311 & 0.504416 & 0.038273 \\  
& $\FF_4$ & 0.465254 & 0.500850 & 0.033897 \\  
& $\FF_5$ & 0.475002 & 0.499614 & 0.024921 & 0.000463 \\ \cline{2-7}   
& All & 0.474052 & 0.499782 & 0.025745 & 0.000421 \\ \noalign{\hrule height 2pt}
$q=13$ & $\FF_1$ & 0.454545 & 0.454545 & 0.090909 \\   s
& $\FF_2$ & 0.421053 & 0.541353 & 0.037594 \\  
& $\FF_3$ & 0.448886 & 0.500857 & 0.047973 & 0.002284 \\  
& $\FF_4$ & 0.464297 & 0.500506 & 0.035197 \\  
& $\FF_5$ & 0.0.474582 & 0.499125 & 0.025379 & 0.000914 \\ \cline{2-7}   
& All & 0.473689 & 0.499249 & 0.026207 & 0.000856 \\ \noalign{\hrule height 2pt}
$q=17$ & $\FF_1$ & 0.466667 & 0.466667 & 0.066667 \\  
& $\FF_2$ & 0.439834 & 0.531120 & 0.029046 \\  
& $\FF_3$ & 0.459414 & 0.500363 & 0.038769 & 0.001454 \\  
& $\FF_4$ & 0.471696 & 0.499594 & 0.028083 & 0.000628\\ \cline{2-7}   
& All & 0.470911 & 0.499732 & 0.028686 & 0.000671 \\ \noalign{\hrule height 2pt}
$q=19$ & $\FF_1$ & 0.529412 & 0.470588 \\  
& $\FF_2$ & 0.442997 & 0.527687 & 0.029316 \\  
& $\FF_3$ & 0.467701 & 0.501108 & 0.030851 & 0.000341 \\  
& $\FF_4$ & 0.473385 & 0.499574 & 0.026458 & 0.000583 \\ \cline{2-7}   
& All & 0.473030 & 0.499719 & 0.026681 & 0.000570 \\ \noalign{\hrule height 2pt}
$q=23$ & $\FF_1$ & 0.476190 & 0.476190 & 0.047619 \\   
& $\FF_2$ & 0.444924 & 0.522678 & 0.032397 \\  
& $\FF_3$ & 0.474418 & 0.500889 & 0.024600 & 0.000094 \\  
& $\FF_4$ & 0.474963 & 0.499584 & 0.024939 & 0.000505 & 0.000009 \\ \cline{2-7}   
& All & 0.474875 & 0.499691 & 0.024940 & 0.000485 & 0.000009 \\ \noalign{\hrule height 2pt}
$q=29$ & $\FF_1$ & 0.518519 & 0.481481 \\
& $\FF_2$ & 0.446499 & 0.517834 & 0.035667 \\  
& $\FF_3$ & 0.471986 & 0.499659 & 0.027400 & 0.000954 \\  
& $\FF_4$ & 0.478870 & 0.499689 & 0.021082 & 0.000354 & 0.000005 \\ \cline{2-7}   
& All & 0.478605 & 0.499708 & 0.021308 & 0.000374 & 0.000005 \\ \noalign{\hrule height 2pt}
\end{tabular}}\caption{Rank Distribution of $X_{321}/\F_q(t)$}\label{rkdist-2}
\end{center}
\end{table}

\begin{table}[htb]
\renewcommand{\arraystretch}{1.25}
\begin{center}\tiny{
\begin{tabular}{ccccccc}\hline
  & Rank & 0 & 1 & 2 & 3 & 4 \\ \noalign{\hrule height 2pt}
$q=5$ & $\FF_1$ & 0.666667 & 0.333333 \\ 
& $\FF_2$ & 0.384615 & 0.615385 \\
& $\FF_3$ & 0.422535 & 0.521127 & 0.056338 \\
& $\FF_4$ & 0.449275 & 0.498551 & 0.040580 & 0.011594 \\
& $\FF_5$ & 0.456009 & 0.502013 & 0.041978 \\
& $\FF_6$ & 0.452345 & 0.496254 & 0.047021 & 0.004379 \\
& $\FF_7$ & 0.466169 & 0.499551 & 0.033704 & 0.000576 \\
& $\FF_8$ & 0.469543 & 0.497307 & 0.030377 & 0.002728 & 0.000046 \\
& $\FF_9$ & 0.478878 & 0.499746 & 0.021099 & 0.000261 & 0.000016 \\ \cline{2-7} 
& All &  0.476768 & 0.499332 & 0.023186 & 0.000695 & 0.000020 \\ \noalign{\hrule height 2pt} 
$q=7$ & $\FF_1$ & 0.400000 & 0.400000 & 0.200000 \\
& $\FF_2$ & 0.419355 & 0.580645 \\
& $\FF_3$ & 0.418502 & 0.511013 & 0.070485 \\
& $\FF_4$ & 0.442467 & 0.503497 & 0.054037 \\
& $\FF_5$ & 0.454274 & 0.497507 & 0.045228 & 0.002991 \\
& $\FF_6$ & 0.465992 & 0.499268 & 0.033768 & 0.000842 & 0.000130 \\ \cline{2-7} 
& All & 0.464007 & 0.499178 & 0.035616 & 0.001088 & 0.000111 \\ \noalign{\hrule height 2pt} 
$q=11$ & $\FF_1$ & 0.555556 & 0.444444 \\
& $\FF_2$ & 0.395604 & 0.549451 & 0.054945 \\
& $\FF_3$ & 0.448479 & 0.504416 & 0.047105 \\
& $\FF_4$ & 0.453537 & 0.500850 & 0.045613 \\
& $\FF_5$ & 0.473027 & 0.499858 & 0.026896 & 0.000219 \\ \cline{2-7} 
& All & 0.471185 & 0.500004 & 0.028612 & 0.000200 \\ \noalign{\hrule height 2pt}
$q=13$ & $\FF_1$ & 0.454545 & 0.454545 & 0.090909 \\
& $\FF_2$ & 0.421053 & 0.541353 & 0.037594 \\ 
& $\FF_3$ & 0.439178 & 0.495717 & 0.057681 & 0.007424 \\
& $\FF_4$ & 0.457609 & 0.499186 & 0.041885 & 0.001320 \\
& $\FF_5$ & 0.475320 & 0.498882 & 0.024611 & 0.001157 & 0.000030 \\ \cline{2-7} 
& All & 0.473842 & 0.498902 & 0.026026 & 0.001203 & 0.000028 \\ \noalign{\hrule height 2pt}
$q=17$ & $\FF_1$ & 0.533333 & 0.466667 \\ 
& $\FF_2$ & 0.423237 & 0.531120 & 0.045643 \\
& $\FF_3$ & 0.460383 & 0.501333 & 0.037800 & 0.000485 \\
& $\FF_4$ & 0.462981 & 0.499280 & 0.036798 & 0.000941 \\ \cline{2-7} 
& All & 0.462723 & 0.499490 & 0.036874 & 0.000913 \\ \noalign{\hrule height 2pt}
$q=19$ & $\FF_1$ & 0.470588 & 0.470588 & 0.058824 \\
& $\FF_2$ & 0.433225 & 0.524430 & 0.039088 & 0.003257 \\
& $\FF_3$ & 0.456792 & 0.497529 & 0.041759 & 0.003920 \\
& $\FF_4$ & 0.463782 & 0.499260 & 0.036043 & 0.000897 & 0.000018 \\ \cline{2-7} 
& All & 0.463354 & 0.499235 & 0.036340 & 0.001054 & 0.000017 \\ \noalign{\hrule height 2pt}
$q=23$ & $\FF_1$ & 0.476190 & 0.476190 & 0.047619 \\
& $\FF_2$ & 0.429806 & 0.522678 & 0.047516 \\
& $\FF_3$ & 0.467590 & 0.500514 & 0.031428 & 0.000468 \\
& $\FF_4$ & 0.469808 & 0.499289 & 0.030094 & 0.000800 & 0.000009 \\ \cline{2-7} 
& All & 0.469620 & 0.499393 & 0.030195 & 0.000782 & 0.000009 \\ \noalign{\hrule height 2pt}
$q=29$ & $\FF_1$ & 0.481481 & 0.481481 & 0.037037 \\
& $\FF_2$ & 0.433289 & 0.517834 & 0.048877 \\
& $\FF_3$ & 0.472713 & 0.500250 & 0.026673 & 0.000364 \\
& $\FF_4$ & 0.474451 & 0.499316 & 0.025503 & 0.000727 & 0.000003 \\ \cline{2-7} 
& All & 0.474346 & 0.499368 & 0.025569 & 0.000714 & 0.000003 \\ \noalign{\hrule height 2pt}
\end{tabular}}\caption{Rank Distribution of $X_{431}/\F_q(t)$}\label{rkdist-3}
\end{center}
\end{table}

\end{document}